\newcommand{\ud}{\mathrm{d}}
\newcommand{\e}{\mathrm{e}}
\newcommand{\R}{\mathbb{R}}
\newcommand{\N}{\mathbb{N}}
\newcommand{\Z}{\mathbb{Z}}
\newcommand{\TT}{\mathbb{T}}
\newcommand{\QQ}{\mathbb{Q}}
\newcommand{\Qu}{\mathbb{Q}^u}
\newcommand{\cdiv}{\nabla\cdot}
\newcommand{\lt}{\left}
\newcommand{\rt}{\right}
\newcommand{\muy}{\mu^y}
\newcommand{\kk}{\tiny{\rm K}}
\newcommand{\bbR}{\mathbb{R}}
\newcommand{\cG}{\mathcal{G}}
\newcommand{\bbP}{\mathbb{P}}
\newcommand{\bbE}{\mathbb{E}}
\newcommand{\dhh}{d_{\mbox {\tiny{\rm Hell}}}}
\newcommand{\la}{\langle}
\newcommand{\ra}{\rangle}
\newtheorem{theorem}{Theorem}[section]
\newtheorem{proposition}{Proposition}
\theoremstyle{definition}
\newtheorem{remark}{Remark}
\newtheorem{asp}[theorem]{Assumption}
\title[Besov Priors for Bayesian Inverse problems]
      {Besov Priors for Bayesian Inverse problems}
\author[M. Dashti, S. Harris and A. M. Stuart]{}
\subjclass{Primary: 60H30, 60G50, 60G15 Secondary: 35J99}
 \keywords{Inverse problems, Besov measures, Elliptic partial differential equations}
\begin{document}
\maketitle

\centerline{\scshape Masoumeh Dashti }
\medskip
{\footnotesize
 \centerline{Mathematics Institute, University of Warwick}
   \centerline{Coventry CV4 7AL, UK}
} 

\medskip

\centerline{\scshape Stephen Harris}
\medskip
{\footnotesize
 \centerline{School of Mathematics, University of Edinburgh}
   \centerline{Edinburgh EH9 3JZ, UK}
} %

\medskip

\centerline{\scshape Andrew Stuart}
\medskip
{\footnotesize
 \centerline{Mathematics Institute, University of Warwick}
   \centerline{Coventry CV4 7AL, UK}
} %

\bigskip

 \centerline{(Communicated by the associate editor name)}

\begin{abstract}
We consider the inverse problem of estimating a function $u$
from noisy, possibly nonlinear, observations. 
We adopt a Bayesian approach to the problem. This
approach has a long history for inversion, dating back to 1970,
and has, over the last decade, gained importance as a practical tool.
However most of the existing theory has been
developed for Gaussian prior measures. Recently Lassas,
Saksman and Siltanen (Inv. Prob. Imag. 2009) 
showed how to construct Besov prior measures,
based on wavelet expansions with random coefficients, and used these
prior measures to study linear inverse problems. In this
paper we build on this development of Besov priors to include
the case of nonlinear measurements. In doing so a key technical tool, 
established here, is a Fernique-like theorem for 
Besov measures. This theorem enables us to 
identify appropriate conditions on the forward solution
operator which, when matched to properties of the prior
Besov measure, imply the well-definedness and well-posedness 
of the posterior measure.  We then consider the application  of these 
results to the inverse problem of finding the diffusion coefficient 
of an elliptic partial differential equation, given noisy 
measurements of its solution.  
\end{abstract}

\section{Introduction}\label{s:intro}

The Bayesian approach to inverse problems is an attractive one.
It mathematizes the way many practitioners
incorporate new data into their understanding of a given
phenomenon; and it results in a precise quantification of uncertainty.
Although this approach to inverse problems has a long history,
starting with the paper \cite{Fr70}, it is only in the last decade
that its use has become widespread as a computationl tool \cite{ks04}. 
The theoretical side of the subject, which is the focus of this
paper, is far from fully developed, with many interesting open questions. 
The work \cite{Fr70} concerned
linear Gaussian problems, and the mathematical underpinnings of
such problems were laid in the papers \cite{man83,leht89}.
An important theme in subsequent theoretical work concerning 
linear Gaussian problems has been to study the
effect of discretization, in both the state space and the data space,
and to identify approaches which give rise to meaningful limits
\cite{La02, La07}. In many imaging problems, the detection of
edges and interfaces is important and such problems are not well
modelled by Gaussian priors. This has led to two recent works
which try to circumvent this issue: the paper \cite{Las09}
introduces the notion of Besov priors, based on wavelet expansions,
and the paper \cite{HL11} uses heirarchical Gaussian models to
create a discretization of the Mumford-Shah penalization, in
one dimension. The thesis \cite{Pi05} studies a number of related
issues for quite general measurement models. A different series
of papers has studied the development of Bayesian inversion
with Gaussian priors and nonlinear measurement functions \cite{CDRS09,St10},
in which general criteria for a Bayes theorem, interpreted to
mean that the posterior distribution has density with respect to the
prior, are given. This framework has been used to study the effect
of approximation of both the space in which the prior lies and the
forward model \cite{CDS10,DaSt11}, allowing the transfer of error
estimates for the approximation of the forward problem, into
estimates for the approximation of the posterior measure. The goal
of the present paper is to extend this type of approximation theory
from Gaussian priors to the Besov priors introduced in \cite{Las09},
and in the case of nonlinear measurement functions.

We consider the noisy nonlinear operator equation
\begin{align}
y=\cG(u)+\eta
\label{eq:one}
\end{align}
with $\cG:X\to Y$, $X,Y$ Banach spaces and $\eta$ a $Y$-valued random variable.
We suppose in this paper that $y$, the operator $\cG$ and the statistical properties of $\eta$ are known, and an estimation of $u\in X$ is to be found.
Such an inverse problem appears in many practical situations where the function of interest (here $u$) cannot be observed directly and has to be obtained from other observable quantities and through the mathematical model relating them.

This problem is in general ill-posed and therefore to obtain a reasonable 
approximation of $u$ in a stable way, we need prior information 
about the solution \cite{EHN96,ks04}. 
In particular if we expect the unknown function to be sparse in some
specific orthonormal basis of $X$, implementing the prior information in a way that respects this sparsity will result in more efficient finite-dimensional approximation 
of the solution. For instance a smooth function with a few local irregularities has a more sparse expansion in a wavelet basis compared, for example, to a Fourier basis, and adopting 
regularization methods which respect the sparse 
wavelet expansion of 
the unknown function is of interest in many applications. 
Approximation techniques based on wavelet bases for recovering 
finite dimensional estimates of the unknown function are extensively studied 
in the approximation theory, signal processsing and statistics
literature; see for example \cite{AbS98,Cham98,DDD04,DoJ94}. 
The paper \cite{AbSS98} introduced a nonparametric Bayesian
approach to the problem of signal recovery, adopting
a clever posterior construction tuned to the simple form of the
observation operator. The paper \cite{Las09} considers more
general linear observation operators and constructs the wavelet-based
Besov prior; the construction is through a generalization of
the Karhunen-Lo\`ev\'e expansion to non-Gaussian coefficients and
wavelet bases.

We adopt a Bayesian approach to the above inverse problem, 
so that regularization is implicit in the prior measure on $X$. 
We study the Besov prior measure introduced in \cite{Las09}
as this measure is constructed so that it factors as the product of independent measures along members of a wavelet basis, see Section \ref{s:bBayes}. 
We first make sense of Bayes rule in an infinite dimensional
setting, and then study the finite dimensional approximations 
after having proved the well-posedness of the posterior over an infinite-dimensional Banach space.
Bayes rule for functions is here interpreted as follows.
We put a prior probability measure $\mu_0(\ud u)$ on $u$
and then specify the distribution of $y|u$ via \eqref{eq:one},
thereby defining a joint probability distribution on $(u,y).$
If the posterior measure $\mu^y(\ud u)=\bbP(\ud u|y)$
is absolutely continuous with respect to the prior measure 
$\mu_0(\ud u)=\bbP(\ud u)$ then
Bayes theorem is interpreted as the following 
formula for the Radon-Nikodym derivative:
\begin{equation}
\label{eq:radon1}
\frac{\ud\mu^y}{\ud\mu_0}(u)\propto\exp \bigl(-\Phi(u;y)\bigr).
\end{equation} 
Here $\Phi$ depends on the the specific instance of
the data $y$, the forward (or observation) operator
$\cG$ and the distribution of $\eta$  
(see Section \ref{s:bBayes}). 
For simplicity we work in the case where $X$
comprises periodic functions on the $d-$dimensional
torus $\TT^d$; generalizations are possible.

The problem of making sense of Bayes rule for nonlinear observation
operators, and with a Gaussian prior, is addressed in \cite{CDRS09,St10};
sufficient conditions on $\Phi$ and the Gaussian prior $\mu_0$ which imply 
the well-definedness and well-posedness of the posterior are obtained. 
Key to obtaining the conditions on $\Phi$ is the 
Fernique theorem for Gaussian measures which gives an upper bound 
on the growth of $\mu_0$-integrable functions. 
Our aim here is to generalize the results of  
\cite{CDRS09,St10} to the case of Besov prior measures and so
we need a similar Fernique-like result for the Besov measures.
In Section \ref{s:Besov-mu}, following \cite{Las09}, 
we construct the Besov measures using wavelet expansions with
i.i.d.\ random coefficients (Karhunen-Lo\`ev\'e expansions) 
of their draws, and prove a Fernique-like result for these 
measures (Theorem \ref{t:Fer}).
We then use this result in Section \ref{s:bBayes} to find the conditions on the operator $\cG$ which ensure the 
well-definedness (Theorem \ref{t:welldb}) and 
well-posedness (Theorem \ref{t:wellpb}) 
of the posterior measure $\mu^y$, 
provided that the Besov measure $\mu_0$ is chosen appropriately. 
We then build on this theory to quantify finite
dimensional approximation of the posterior measure,
culminating in Theorem \ref{t:wellp2b}.

In Section \ref{s:elliptic} we apply these results to
the inverse problem of finding the diffusion coefficient of a linear 
uniformly elliptic partial differential equation
in divergence form, in a bounded domain in dimension 
$d \le 3$, from measurements of the solution
in the interior.  Such an inverse problem emerges in geophysical applications where $u$ is the log-permeability of 
the subsurface. 
It is studied using a Bayesian approach on function space
in \cite{DaSt11} for Gaussian priors and in \cite{ScSt11}
for non-Gaussian priors which give rise to an almost sure
uniform lower bound on the permeability. In many
subsurface applications it is natural to expect that
$u$ is a smooth function with a few local irregularities, and
it is not natural to expect an almost sure uniform lower bound on
the permeability, nor is a Gaussian prior appropriate.
For these reasons a wavelet basis, and hence 
a Besov prior, provides a plausible candidate from a modelling
perspective. 
We show that the conditions we require for 
$\Phi$ in this problem hold naturally
when $X$ is the space of H\"older continuous functions $C^t$.
This also suggests 
the use of wavelet bases which are unconditional bases for $C^t$.
Thus for the elliptic inverse problem with Besov
priors we state a well-posedness result
for the posterior measure (Theorem \ref{thm:dwellposterior})
and we also quantify the effect of the finite dimensional approximation 
of the log-permiablility in the wavelet basis used in construction 
of the prior, on the posterior measure,
with result summarized in Theorem \ref{t:appw}.

\section{Besov measures}
\label{s:Besov-mu}
We develop Besov measures following the construction
in \cite{Las09}.
Let $\{\psi_l\}_{l=1}^\infty$ be a basis for $L^2(\TT^d)$, $\TT^d=(0,1]^d$, $d\le3$, so that any $f\in L^2(\TT^d)$ can be written as
$$
f(x)=\sum_{l=1}^\infty f_l\,\psi_l(x).
$$
Let $X^{s,q}$ 
be a Banach space with norm $\|\cdot\|_{X^{s,q}}$ defined as
$$
\|f\|_{X^{s,q}}=\left(  \sum_{l=1}^\infty l^{(sq/d+q/2-1)}|f_l |^q \right)^{1/q}
$$
with $q\ge 1$ and $s>0$. We now construct a probability
measure on functions by randomizing the coefficients of an
expansion in the basis $\{\psi_l\}_{l=1}^\infty$. 
The space $X^{s,q}$ will play a role analogous to
the Cameron-Martin space for this measure. Indeed
when $\{\psi_l\}_{l=1}^\infty$ is chosen to
be the Karhunen-Lo\`eve basis for a Gaussian measure and $q=2$,
our choice of coefficients will ensure that $X^{s,2}$ is
precisely the Cameron-Martin space.

Let $1\leq q <\infty$, $s>0$, and $\kappa>0$ be fixed and $\{\xi_l\}_{l=1}^\infty$ 
be real-valued i.i.d.\ 
random variables with probability density function
\begin{equation*}
\pi_\xi(x) \propto \exp(-\frac12|x|^q).
\end{equation*}
Let the random function $u$ be defined as follows
\begin{equation}\label{e:ranfun}
u(x) = \sum_{l=1}^\infty l^{-(\frac{s}{d}+ \frac{1}{2} -\frac{1}{q})}(\tfrac{1}{\kappa})^\frac{1}{q}\xi_l\psi_l(x).
\end{equation}
We will refer to the induced measure on functions $u$
as $\mu_0.$
We note that, since $\{\psi_l\}_{l=1}^\infty$ is an orthonormal basis and
\begin{equation*}
u(x) = \sum_{l=1}^\infty u_l\psi_l(x)
\end{equation*}
with $u_l=l^{-(\frac{s}{d}+ \frac{1}{2} -\frac{1}{q})}(\frac{1}{\kappa})^\frac{1}{q}\xi_l$, we have
\begin{align}
\prod_{l=1}^\infty \exp(-\frac12|\xi_l|^q) &=\prod_{l=1}^\infty \exp\left(-\frac{\kappa}{2}l^{\frac{qs}{d}+\frac{q}{2}-1}|u_l|^q\right)\nonumber\\
& = \exp\left(-\frac{\kappa}{2} \sum_{l=1}^{\infty} l^{\frac{qs}{d}+\frac{q}{2}-1}|u_l|^q\right)\nonumber\\
& = \exp(-\frac{\kappa}{2} \|u\|_{X^{s,q}}^q)\label{e:Xsp}
\end{align}
Thus, informally, $u$ has a Lebesgue density proportional to $ \exp(-\frac{\kappa}{2} \|u\|_{X^{s,q}}^q)$.
\noindent We say that $u$ is distributed according to an 
$X^{s,q}$ measure with parameter $\kappa$, or, briefly,
a $(\kappa, X^{s,q})$ measure.

 \begin{remark}\label{r:psi}
 If $\{\psi_l\}_{l=1}^\infty$ in (\ref{e:ranfun}) is an 
 $r$-regular wavelet basis
 \footnote{An $r$-regular wavelet basis for $L^2(\R^d)$ is a wavelet basis
 with $r$-regular scaling function and mother wavelets. A function $f$ is $r$-regular if $f\in C^r$ and $|\partial^\alpha f(x)|\le C_m(1+|x|)^{-m}$, for any integer
 $m\in \N$ and any multi-index $\alpha$ with
 $|\alpha|=\alpha_1+\dots+\alpha_d\le r$. 
 An $r$-regular basis for $L^2(\TT^d)$ is obtained by periodification of
  the $r$-regular wavelets of $L^2(\R^d)$ \cite{Dau92,Mey92}.}
 for $L^2(\TT^d)$, with $r>s$,
then $\|\cdot\|_{X^{s,q}}$ is the Besov $B^{s}_{qq}$ \cite{Tri83} norm and $u$
 is distributed according to  a $(\kappa, B^{s}_{qq})$ measure.
Furthermore, if $q=2$ with $\{\psi_l\}_{l=1}^\infty$ either a 
wavelet or Fourier basis,  we obtain a Gaussian measure 
with Cameron-Martin space $B^s_{22}$, which is
simply the Hilbert space $H^s=H^s(\TT^d)$. 
Indeed (\ref{e:ranfun}) reduces to 
\begin{equation*}
u(x) = \sqrt{\frac{1}{\kappa}}\sum_{l=1}^\infty l^{-\frac{s}{d}}\, \xi_l\, \psi_l(x)
\end{equation*}
where 
$\{{\xi_l}\}_{l=1}^\infty$ are independent and identically distributed mean zero, unit variance Gaussian random variables. 
This is simply the Karhunen-Lo\`eve representation of draws from
a mean zero Gaussian measure.
\end{remark}

The following result shows that 
the random variable $u$ is well-defined
and characterizes apsects of its regularity.

\begin{proposition}\cite{Las09} \label{p:LasFer}
Let $u$ be distributed according 
to a $(\kappa, X^{s,q})$ measure. 
The following are equivalent 
\begin{itemize}
	\item[i)] $\|u\|_{X^{t,q}}<\infty$ almost surely.
	\item[ii)] $\mathbb{E}(\exp(\alpha\|u\|^q_{X^{t,q}}))<\infty$ 
	               for any $\alpha\in (0,\kappa/2);$
	\item[iii)] $t<s-d/q$.
       
\end{itemize}
\end{proposition}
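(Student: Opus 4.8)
The plan is to reduce all three statements to a single question about a series of independent non-negative random variables. Writing $u_l = l^{-(\frac{s}{d}+\frac12-\frac1q)}(\frac1\kappa)^{1/q}\xi_l$ and inserting this into the $X^{t,q}$ norm, the exponents of $l$ combine cleanly and give
\[
\|u\|_{X^{t,q}}^q = \frac{1}{\kappa}\sum_{l=1}^\infty l^{-\beta}|\xi_l|^q,\qquad \beta := \frac{(s-t)q}{d}.
\]
Condition iii) is exactly $\beta>1$; condition i) is the almost sure finiteness of $S := \sum_{l=1}^\infty l^{-\beta}|\xi_l|^q$; and condition ii) is the finiteness of $\mathbb{E}\bigl[\exp(\gamma S)\bigr]$ for every $\gamma\in(0,\tfrac12)$, upon setting $\gamma=\alpha/\kappa$. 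I would then prove the cycle iii)$\Rightarrow$ii)$\Rightarrow$i)$\Rightarrow$iii).

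For iii)$\Rightarrow$ii) I would use independence to factor $\mathbb{E}\bigl[\exp(\gamma S)\bigr]=\prod_{l}\mathbb{E}\bigl[\exp(\gamma l^{-\beta}|\xi_l|^q)\bigr]$. The key scalar input is that $g(a):=\mathbb{E}\bigl[\exp(a|\xi|^q)\bigr]$ is finite precisely for $a<\tfrac12$, because $\exp(a|x|^q)\pi_\xi(x)$ is integrable iff $a<\tfrac12$; moreover $g$ is smooth on $[0,a_0]$ for any $a_0<\tfrac12$, with $g(0)=1$, so $g(a)\le 1+Ca$ there. Taking $a_l=\gamma l^{-\beta}\le\gamma<\tfrac12$ and passing to logarithms yields $\sum_l\log g(a_l)\le C\gamma\sum_l l^{-\beta}<\infty$ since $\beta>1$, so the product converges. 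The step ii)$\Rightarrow$i) is immediate, as a finite exponential moment forces $S<\infty$ almost surely.

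The delicate implication is i)$\Rightarrow$iii), which I would prove in contrapositive form: if $\beta\le1$ then $S=\infty$ almost surely. The right tool is the convergence criterion for sums of independent non-negative random variables, namely $\sum_l Z_l<\infty$ a.s. if and only if $\sum_l\mathbb{E}[Z_l\wedge1]<\infty$, a consequence of the Kolmogorov three-series theorem. With $Z_l=l^{-\beta}|\xi_l|^q$, I would bound from below $\mathbb{E}[Z_l\wedge1]\ge l^{-\beta}\,\mathbb{E}\bigl[|\xi|^q\mathbf{1}_{\{|\xi|^q\le l^{\beta}\}}\bigr]$, and since $\mathbb{E}[|\xi|^q]\in(0,\infty)$ the truncated expectation exceeds $\tfrac12\mathbb{E}[|\xi|^q]$ for all large $l$. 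Hence $\mathbb{E}[Z_l\wedge1]\gtrsim l^{-\beta}$, so $\sum_l\mathbb{E}[Z_l\wedge1]$ diverges whenever $\beta\le1$, forcing $S=\infty$ almost surely.

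The main obstacle is this last sharp estimate: in contrast to the Gaussian Fernique setting, where the summands are squares with automatically controlled moments, here one must extract the exact polynomial rate $\mathbb{E}[Z_l\wedge1]\asymp l^{-\beta}$ from the heavier-tailed increments and couple it with a genuine dichotomy. The dichotomy is supplied by noting that $\{S<\infty\}$ is a tail event, so Kolmogorov's zero--one law rules out intermediate probabilities and makes the almost sure statement in i) unambiguous.
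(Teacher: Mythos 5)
The paper does not actually prove this proposition: it is quoted from Lassas, Saksman and Siltanen \cite{Las09} and used as a black box, so there is no in-house argument to compare against. Judged on its own, your proof is correct and is essentially the standard route. The reduction is exact: substituting $u_l$ into the $X^{t,q}$ norm gives $\|u\|_{X^{t,q}}^q=\kappa^{-1}\sum_l l^{-\beta}|\xi_l|^q$ with $\beta=(s-t)q/d$, so iii) is $\beta>1$ and ii) is $\mathbb{E}[\exp(\gamma S)]<\infty$ for all $\gamma<\tfrac12$. For iii)$\Rightarrow$ii), the factorization $\mathbb{E}[\exp(\gamma S)]=\prod_l g(\gamma l^{-\beta})$ is justified by monotone convergence applied to the partial sums, and your scalar estimate $g(a)\le 1+Ca$ on $[0,a_0]$, $a_0<\tfrac12$, is valid because $g'(a)=\mathbb{E}[|\xi|^q\e^{a|\xi|^q}]$ is bounded there; summability of $\sum_l l^{-\beta}$ then closes the argument. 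The implication ii)$\Rightarrow$i) is trivial, and for i)$\Rightarrow$iii) the truncated first-moment criterion for independent non-negative summands (equivalently the three-series theorem) gives exactly the dichotomy you need, with $\mathbb{E}[Z_l\wedge 1]\asymp l^{-\beta}$ forcing divergence when $\beta\le 1$. The only cosmetic point is that your phrase ``for all large $l$'' in the lower bound on the truncated expectation tacitly assumes $\beta>0$ (so that $l^{\beta}\to\infty$); when $\beta\le 0$ the terms $Z_l=l^{-\beta}|\xi_l|^q$ do not even tend to zero and divergence is immediate, so nothing is lost. The appeal to Kolmogorov's zero--one law is harmless but redundant, since the three-series criterion already yields an almost-sure dichotomy. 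In short: a complete, self-contained proof of a result the paper only cites.
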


Part (ii) of the above proposition provides a Fernique-like result \cite{DaZa92} for Besov priors. Indeed
for $q=2$, this implies the Fernique theorem 
for all spaces $H^t$, $t<s-\frac{d}{q}$ on which the
measure is supported. 
The Fernique result for Gaussian measures, however, is much stronger: it shows that if $u\in X$ almost surely with respect to the Gaussian 
measure $\mu$ then $\bbE^{\mu} \exp(\epsilon\|u\|_X^2)<\infty$ for $\epsilon$ small enough.
Theorem \ref{t:Fer} below goes some way towards showing a 
similar result for Besov measures by extending (ii) to
$C^t$ spaces (see Remark \ref{r:Fer}).  Theorem  \ref{t:Fer} 
will also be useful in Section \ref{s:bBayes}
when we deal with inverse problems,
in the sense that it allows less restrictive conditions on the prior measure.

Before proving Theorem \ref{t:Fer} we make
a preliminary observation concerning the regularity of
$u$ given by \eqref{e:ranfun} in $C^t$ spaces.
With the same conditions on $s$ and $t$ as are assumed in Proposition \ref{p:LasFer}
one can show that $\bbE\|u\|_{C^t(\TT^d)}<\infty$.
Indeed, for any $\gamma\ge 1$, and $u$ given by \eqref{e:ranfun},
using the definition of the Besov norm we can write
\begin{align*}
\|u\|_{B^t_{\gamma q,\gamma q}}^{\gamma q}
&=(\tfrac{1}{\kappa})^\gamma
\sum_{l=1}^\infty l^{\frac{\gamma q t}{d}+\frac{\gamma q}{2}-1}
       l^{-\gamma q(\frac{s}{d}+ \frac{1}{2} -\frac{1}{q})}|\xi_l|^{\gamma q}.
\end{align*}
Noting that $\bbE |\xi_l|^{\gamma q}=C(\gamma)$ and 
the exponent of $l$ is smaller than $-1$ (since $t<s-d/p$), we have
\begin{align*}
\bbE\|u\|_{B^t_{\gamma q,\gamma q}}^{\gamma q}
=C(\gamma)(\tfrac{1}{\kappa})^\gamma
\sum_{l=1}^\infty l^{\frac{\gamma q}{d}(t-s)+\gamma-1}
\,\le\, C_1(\gamma).
\end{align*}
Now for a given $t<s-d/q$, choose $\gamma$ 
large enough so that $\frac{d}{\gamma q}<s-d/q-t$. 
Then the embedding $B^{t_1}_{\gamma q,\gamma q}\subset C^t$ 
for any $t_1$ satisfying $t+\frac{d}{\gamma q}<t_1<s-d/q$ \cite{Tri83} implies that
$\bbE\|u\|_{C^t(\TT^d)}<\infty$ and hence that $u \in C^t$
$\mu_0-$almost surely.
In the following theorem we show that, for
small enough $\alpha$, $\bbE \exp(\alpha\|u\|_{C^t(\TT^d)})<\infty$. For the proof we use the idea of the proof of a 
similar result for Radamacher series which appears
in Kahane \cite{Kah85}. It is also key in this proof that 
the wavelet basis is an unconditional 
basis for H\"older spaces \cite{Woj97}.

\begin{theorem}\label{t:Fer}
Let $u$ be a random function defined as in (\ref{e:ranfun})
with $q\ge 1$ and $s>d/q$. Then for any $t<s-d/q$
$$
\bbE(\exp(\alpha||u||_{C^{t}}))<\infty
$$
for all $\alpha\in (0,\kappa/(2\,r^*))$, 
with $r^*$ a constant depending on 
$q$, $d$, $s$ and $t$.
\end{theorem}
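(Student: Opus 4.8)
The plan is to upgrade the finiteness of $\bbE\|u\|_{C^t}$, just established, to an exponential tail bound for $\|u\|_{C^t}$, imitating the bootstrapping argument Kahane uses for Rademacher series. Two structural facts drive the proof. First, the coefficients $\xi_l$ are symmetric, so the terms $u_l\psi_l$ form an independent symmetric family in the Banach space $C^t(\TT^d)$; this is what replaces the rotation invariance exploited in the Gaussian Fernique theorem. Second, since $\{\psi_l\}$ is an \emph{unconditional} basis for $C^t(\TT^d)$, the H\"older norm of a wavelet series is controlled by the magnitudes of its coefficients alone: reindexing the basis by scale and location, $\psi_l\leftrightarrow\psi_{jk}$ with $\sim 2^{jd}$ locations $k$ at scale $j$, the H\"older--Zygmund characterization gives
$$\|u\|_{C^t}\le c_*\sup_{j,k}2^{j(t+d/2)}|u_{jk}|,$$
with $c_*$ the basis (equivalence) constant.

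I would then introduce $F(\lambda):=\bbP(\|u\|_{C^t}>\lambda)$ and, using the symmetry of the $\xi_l$, derive a self-improving inequality of Hoffmann--J\o rgensen/L\'evy type,
$$F(2\lambda+s)\le 4F(\lambda)^2+\bbP\Bigl(\sup_l\|u_l\psi_l\|_{C^t}>s\Bigr).$$
The preliminary bound $\bbE\|u\|_{C^t}<\infty$ supplies a value $\lambda_0$ with $F(\lambda_0)\le\frac14$, which seeds the iteration, so the whole argument reduces to controlling the correction term, i.e.\ the probability that a single term of the series is large.

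The correction term I estimate scale by scale. With $a_l=l^{-(\frac sd+\frac12-\frac1q)}\kappa^{-1/q}$ the deterministic coefficient and $\|\psi_{jk}\|_{C^t}\sim 2^{j(t+d/2)}$ for $L^2$-normalized wavelets, the $C^t$ norm of the term at scale $j$ is $\sim b_j|\xi_{jk}|$ with
$$b_j=2^{-j\beta}\kappa^{-1/q},\qquad \beta:=s-d/q-t>0,$$
so the weights decay geometrically. A union bound over the $\sim 2^{jd}$ locations at each scale, with the tail estimate $\bbP(|\xi|>\tau)\le C\exp(-\tfrac12\tau^q)$ coming from the density $\propto\exp(-\tfrac12|x|^q)$, yields
$$\bbP\Bigl(\sup_l\|u_l\psi_l\|_{C^t}>s\Bigr)\le\sum_{j\ge0}2^{jd}\exp\!\Bigl(-\frac{\kappa}{2}2^{jq\beta}s^q\Bigr)\le C\exp\!\Bigl(-\frac{\kappa}{2r^*}s^q\Bigr),$$
the sum being dominated by the coarsest scale $j=0$ because $\beta>0$, and with $c_*$ and the reindexing constants absorbed into $r^*=r^*(q,d,s,t)$. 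Feeding this into the iterated inequality gives $F(\lambda)\le C\exp(-\frac{\kappa}{2r^*}\lambda^q)$ for large $\lambda$; since $q\ge1$ forces $\lambda^q\ge\lambda$ for $\lambda\ge1$, this tail is more than enough to give $\bbE\exp(\alpha\|u\|_{C^t})<\infty$ for every $\alpha\in(0,\kappa/(2r^*))$.

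The step I expect to be the real obstacle is the passage from $\|u\|_{C^t}$ to the coefficient magnitudes and the accompanying symmetrization: because $C^t$ is genuinely non-Hilbertian, the rotation-invariance trick of the Gaussian theory is unavailable, and one must instead lean on the unconditional-basis property of wavelets for H\"older spaces to both justify the symmetric-series inequalities and to reduce the norm to a supremum of coefficients. The remaining bookkeeping---tracking the geometric sum over scales and the equivalence constant $c_*$ into the explicit threshold $\kappa/(2r^*)$---is routine but must be done carefully; one should also note that the identification $C^t=B^t_{\infty\infty}$ used at the outset requires $t\notin\N$, the integer case being handled through the Zygmund space.
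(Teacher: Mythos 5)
Your overall strategy is the right one and is essentially the paper's: reduce $\|u\|_{C^t}$ to a weighted supremum of the coefficients via the unconditionality of the wavelet basis (here $\|u\|_{C^t}=\|u\|_{B^t_{\infty,\infty}}=\sup_l \lambda_l|\xi_l|$ with $\lambda_l=l^{(t-s)/d+1/q}$), and then bootstrap the already-established finiteness of $\bbE\|u\|_{C^t}$ into exponential integrability by a Kahane-type self-improving tail inequality. The gap is in how you close that bootstrap. You invoke the generic Hoffmann--J{\o}rgensen inequality $F(2\lambda+s)\le 4F(\lambda)^2+\bbP(\sup_l\|u_l\psi_l\|_{C^t}>s)$ and then assert that ``feeding in'' the bound $\bbP(\sup_l\|u_l\psi_l\|_{C^t}>s)\le C\exp(-\tfrac{\kappa}{2r^*}s^q)$ and iterating yields $F(\lambda)\le C\exp(-\tfrac{\kappa}{2r^*}\lambda^q)$. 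That conclusion does not follow. The quadratic recursion cannot sustain a tail of order $\e^{-c\lambda^q}$ for $q>1$: one would need $4F(\lambda)^2\approx 4\e^{-2c\lambda^q}$ to be dominated by $\e^{-c(2\lambda+s)^q}\approx\e^{-c2^q\lambda^q}$, and $2<2^q$. (Indeed the $\lambda^q$ tail is precisely the strengthening the authors say in Remark \ref{r:Fer} they were unable to prove.) Worse, even the exponential tail $\e^{-c\lambda}$ that the theorem actually needs is delicate here: with the additive correction term present, the iteration either stalls at the level of $\delta(s)$ if $s$ is held fixed, or, if you take $s\propto\lambda$, the dyadic points grow like $3^n$ while the quadratic gain is only $2^{2^n}$, giving $F(\lambda)\lesssim\exp(-c\lambda^{\log 2/\log 3})$, which is \emph{not} enough for $\bbE\exp(\alpha\|u\|_{C^t})<\infty$. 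Closing this requires either a careful intermediate choice of $s_n$ (e.g.\ $s_n\sim\lambda_n^{1/q}$, which works for $q>1$ but degenerates at $q=1$) or an appeal to the $\psi_1$-Orlicz-norm form of Hoffmann--J{\o}rgensen (Ledoux--Talagrand, Theorem 6.21), neither of which you supply; as written, the decisive step is exactly the one that is glossed over.

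The paper avoids this entirely by proving a purely \emph{multiplicative} inequality $\bbP(\sup_l\lambda_l|\xi_l|>2r)\le 2\,\bigl(\bbP(\sup_l\lambda_l|\xi_l|>r)\bigr)^2$ with no additive term. The point is that the event ``some single weighted coefficient jumps from below $r$ to above $2r$'' is not carried as a separate correction: one shows, using the explicit density $\propto\exp(-\tfrac12|x|^q)$ with $q\ge1$ and $r$ large, that $\bbP(\lambda_m|\xi_m|>2r)\le\bigl(\bbP(\lambda_m|\xi_m|>r)\bigr)^2$, so this contribution is absorbed into the quadratic bound. The resulting recursion $\mathscr{P}(2^n r)\le\frac12(2\beta)^{2^n}$ with $\beta=\bbP(\sup_l\lambda_l|\xi_l|>r)\le\frac14$ decays doubly exponentially in $n$ while $2^nr$ grows only exponentially, which is exactly an $\e^{-c\lambda}$ tail and hence the claimed exponential moment. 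If you want to salvage your route, replace the asserted $\e^{-c\lambda^q}$ output of the iteration by the $\psi_1$-version of Hoffmann--J{\o}rgensen (your union-bound estimate on $\sup_l\|u_l\psi_l\|_{C^t}$, together with $q\ge1$, does give a finite $\psi_1$-norm for the maximal term, and $\bbE\|u\|_{C^t}<\infty$ supplies the other ingredient); alternatively, prove the single-term squaring inequality and drop the additive term, as the paper does.
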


\begin{proof}
First, let $\kappa=1$. We have \cite{Tri83}
$$
\|u\|_{C^t}=\|u\|_{B^t_{\infty,\infty}}=\sup_{l\in \N} l^{(t-s)/d+1/q}|\xi_l|=\sup_{l\in \N} \lambda_l |\xi_l|
$$
with $\lambda_l =l^{(t-s)/d+1/q}$. Note that,
as shown above, $\|u\|_{C^t}<\infty$, $\mu_0$-almost surely.
Fix $r>0$ and let
\begin{subequations}
$$
A=\{\omega\in\Omega:\sup_{l\in \N} \lambda_l|\xi_l(\omega)|>r\},
$$
$$
B=\{\omega\in\Omega:\sup_{l\in \N} \lambda_l|\xi_l(\omega)|>2r\}.
$$
\end{subequations}
Consider the following disjoint partition of $A$
$$
A_m=\{\omega\in\Omega:\sup_{1\le l\le m-1} \lambda_l|\xi_l|\le r,\; \lambda_m|\xi_m|>r\},
$$
and define
$$
B_m=\{\omega\in\Omega:\sup_{l\ge m} \lambda_l|\xi_l|> 2r\}.
$$
We have
$$
\bbP(A_m\cap B)= \bbP(A_m\cap B_m).
$$
Noting that
\begin{align*}
B_m=C_m\cup B_{m+1}\quad\mbox{with}\quad C_m=\{\lambda_m|\xi_m|> 2r\},
\end{align*}
and
\begin{align*}
A_m=D_m\cap E_m,\quad\mbox{with}\quad &D_m=\{\omega\in\Omega:\sup_{1\le l\le m-1} \lambda_l|\xi_l|\le r\},\\
&E_m=\{\omega\in\Omega:\lambda_m|\xi_m|>r\},
\end{align*}
we can write
\begin{align}
\bbP(A_m\cap B)
&\le \bbP(A_m\cap C_m)+\bbP(A_m\cap B_{m+1})\nonumber\\
&= \bbP(D_m\cap C_m)+\bbP(A_m)\bbP(B_{m+1})\nonumber\\
&= \bbP(D_m)\bbP(C_m)+\bbP(A_m)\bbP(B_{m+1}).\label{e:parP}
\end{align}
We now show that $\bbP(C_m)\le (\bbP(E_m))^2$ for large enough $r$. First let $q>1$. We have, with $\hat r=r/\lambda_m$, $c_0=\int_{\R^d} \exp(-\frac{1}{2}|x|^q)\,\ud x$ and $c_d=2\pi^{d/2}/\Gamma(d/2)$,
\begin{align}
\bbP(C_m)
&=\frac{1}{c_0}\,\int_{\R^d} |x|\,\chi_{|x|>2\hat r}\,\e^{-\frac{1}{2}|x|^q}\ud x\nonumber\\
&=\frac{c_d}{c_0}\,\int_{2\hat r}^\infty \tilde\rho^d\,\e^{-\frac{1}{2}\tilde\rho^q}\ud \tilde\rho\nonumber\\
&=2^{d+1}\frac{c_d}{c_0}\,\int_{\hat r}^\infty \rho^d\,\e^{-\frac{2^q}{2}\rho^q}\ud \rho\nonumber\\
&=2^{d+1}\frac{c_d}{c_0}\,\int_{\hat r}^\infty \rho^d\,\e^{-\frac{1}{2}\rho^q}\,\e^{-\frac{2^q-1}{2}\rho^q}\ud \rho\nonumber\\
&\le 2^{d+1}\frac{c_d}{c_0}\,\e^{-\frac{2^q-1}{2}\hat r^q}\int_{\hat r}^\infty \rho^d\,\e^{-\frac{1}{2}\rho^q}\,\ud \rho\nonumber\\
&= 2^{d+1}\,\e^{-\frac{2^q-1}{2}\hat r^q}\,\bbP(E_m)\label{e:PCm0}.
\end{align}
Then one can show that $2^{d+1}\e^{-\frac{2^q-1}{2}\hat r^q}\le \bbP(E_m)$ for sufficiently large $\hat r$.
Indeed there exists $c_{q,d}$ depending only on $q$ and $d$ such that for $\rho\ge \hat r= c_{q,d}$ and $\nu\le 2^q-2$ we have 
\begin{align}\label{e:rbnd1}
\rho^d>2^{d+1}c_0\frac{\nu q+q}{2\,c_d}\,\rho^{q-1}\,\e^{-\nu\,\rho^q/2}.
\end{align}
Therefore we can write
\begin{align*}
\bbP(E_m)
&=\frac{c_d}{c_0}\, \int_{\hat r}^\infty \rho^d\,\e^{ -\frac{1}{2}\rho^q }\ud \rho\\
&> 2^{d+1}\int_{\hat r}^\infty \frac{\nu q+q}{2}\,\rho^{q-1}\,\e^{-(\nu+1)\,\rho^q/2}\ud \rho\\
&= 2^{d+1}\e^{-\frac{1+\nu}{2}\,\hat r^q} 
\,\ge\, 2^{d+1}\e^{-\frac{2^q-1}{2}\,\hat r^q} 
\end{align*}
Hence going back to (\ref{e:PCm0}) we have 
\begin{align}\label{e:PCm}
\bbP(C_m) \le (\bbP(E_m))^2.
\end{align}
For $q=1$ we can calculate $\bbP(E_m)$ and $\bbP(C_m)$ as follows 
$$
\bbP(E_m)=\frac{c_d}{c_0}\sum_{k=0}^d
       2^{k+1}\,{r}^{d-k}{d \choose k}\, \e^{-\frac{r}{2}},\quad \mbox{and}\quad 
  \bbP(C_m)=\frac{c_d}{c_0}\sum_{k=0}^d 2^{k+1}\,(2r)^{d-k}{d \choose k}\, \e^{-{r}}
$$
where $c_d/c_0=2^d$ when $q=1$. This readily shows (\ref{e:PCm}) for the case of $q=1$ as well.
Substituting (\ref{e:PCm}) in (\ref{e:parP}) we get
\begin{align*}
\bbP(A_m\cap B) 
&\le \bbP(D_m)(\bbP(E_m))^2+\bbP(A_m)\bbP(B_{m+1})\\
&= \bbP(D_m\cap E_m)\bbP(E_m)+\bbP(A_m)\bbP(B_{m+1})\\
&= \bbP(A_m)\bbP(E_m)+\bbP(A_m)\bbP(B_{m+1}).
\end{align*}
This, since  $E_m\subseteq A$ and $B_{m+1}\subseteq A$, implies that
$$
\bbP(A_m\cap B) \le 2 \bbP(A_m)\,\bbP(A).
$$
Writting the above inequality for $m=1,2,\dots$ and then adding them we obtain
\begin{align}\label{e:PBA}
\bbP(B)=\bbP(A\cap B)\le 2(\bbP(A))^2,
\end{align}
for $r>c_{q,d}$ (note that $\hat r=r/\lambda_m>c_q$ and $\lambda_m<1$).

Let $\mathscr{P}(\rho)=\bbP(\|u\|_{C^t}>\rho)$. We have
$$
\int_{C^t(\TT^d)}\e^{\epsilon\|u\|_{C^t}}\,\mu_0(\ud u)
=\int_{\Omega}\e^{\epsilon\|u(\omega)\|_{C^t}}\,\bbP(\ud\omega)
=-\int_0^\infty \e^{\epsilon\rho}\,\ud \mathscr{P}(\rho).
$$
Let $\mathscr{P}(r)=\bbP(A)=\beta$ and note that by (\ref{e:PBA}) we have
$$
\mathscr{P}(r)=\frac{1}{2}(2\beta),
\quad \mathscr{P}(2r)\le\frac{1}{2}(2\beta)^2,
\;\dots\,,\mathscr{P}(2^n r)\le\frac{1}{2}(2\beta)^{2^n},
$$
and therefore, choosing $r$ large enough so that $\beta<\frac{1}{2}$,
$$
-\int_{2^n r}^{2^{n+1}r}\e^{\epsilon\rho}\,\ud \mathscr{P}(\rho)
\le  \e^{2^{n+1}\epsilon r}\mathscr{P}(2^{n}r)
= \frac{1}{2}(2\beta)^{2^n}\,\e^{2^{n+1}\epsilon r}.
$$
Hence, letting $\beta=1/4$, we can write
\begin{align*}
-\int_0^\infty \e^{\epsilon\rho}\,\ud \mathscr{P}(\rho)
&\le-\int_0^r \e^{\epsilon\rho}\,\ud \mathscr{P}(\rho)+ \sum_{n=0}^\infty \frac{1}{2}(2)^{-2^n}\,\e^{2^{n+1}\epsilon r}\\
&=-\int_0^r \e^{\epsilon\rho}\,\ud \mathscr{P}(\rho)+ \sum_{n=0}^\infty\frac{1}{2} \e^{-2^{n}(\ln 2-2\epsilon r)}
\end{align*}
which is finite for $\epsilon<\frac{\ln 2}{2r}$. 
This proves the desired result for $\kappa=1$, once
we can identify the lower bound on $r$; a simple
rescaling gives the result for general $\kappa.$

We need to choose $r$ large enough so that $\bbP(A)=1/4$ and (\ref{e:rbnd1}) holds true as well. We have
\begin{align*}
\bbP(A)
&\le \sum_{j=1}^\infty \bbP\{\omega: \lambda_j|\xi_j(\omega)|> r\}
\le \frac{1}{c_{01}}\sum_{j=1}^\infty 
          \int_{\frac{r}{\lambda_j}}^\infty x^d\e^{-{x}/{2}}\,\ud x\\
&= \frac{1}{c_{01}}\sum_{j=1}^\infty \sum_{k=0}^d
       2^{k+1}\,\Big(\frac{r}{\lambda_j}\Big)^{d-k}{d \choose k}\, \e^{-\frac{r}{2\lambda_j}}\\
&\le \frac{1}{c_{01}} \sum_{k=0}^d 2^{k+1}{d \choose k}
\int_r^\infty x^{(d-k)(\frac{s-t}{d}-\frac{1}{q}) }
          \,\exp(-\frac{1}{2}x^{\frac{s-t}{d}-\frac{1}{q}})\,\ud x
\end{align*}
with $c_{01}=c_d^{-1}\int_{\R^d} \e^{-{|x|}/{2}}\,\ud x$ and 
noting that $\lambda_1=1$ and $\lambda_j=j^{{(t-s)}/{d}+{1}/{q}}$.
Now choose $r_1=r_1(s,t,d,q)$ such that
$$
\int_{r_1}^\infty x^{(d-k)(\frac{s-t}{d}-\frac{1}{q}) }
          \,\exp(-\frac{1}{2}x^{\frac{s-t}{d}-\frac{1}{q}})\,\ud x
< \frac{c_{01}}{4(d+1)}\frac{1}{2^{k+1}{d \choose k}},\quad\mbox{for}\quad k=0,\dots,d,
$$
and therefore $\bbP(A)<1/4$. 
To have (\ref{e:rbnd1}) true as well, we set
\begin{align*}
&\nu=0,\quad\mbox{and}\quad c_{q,d}=(2^dq\,c_0/c_d)^{1/(d-q+1)},
\quad\mbox{for}\quad1\le q<1+3/4\\
&\nu=1,\quad\mbox{and}\quad c_{q,d}=\max\{1,2^{d+2}q\, c_0/c_d\},
\quad\mbox{for}\quad q\ge 1+3/4.
\end{align*}
Since for $\kappa\neq 1$, $\epsilon=\alpha/\kappa$, this implies that $\alpha\le\kappa/(2\,r^*)$ with 
\begin{align}\label{e:rbnd}
r^*=(\ln2)\,\max\{r_1,c_{q,d}\}.
\end{align}
\end{proof}
 
 \begin{remark}\label{r:Fer}
Note that the bound on $\alpha$ in Theorem \ref{t:Fer} is
not sharp. Also, under the same 
condition as in Theorem \ref{t:Fer}, it is 
natural to expect a similar result to hold with power $q$ of 
 $\|u\|_{C^t}$ in the exponent and for the result to extend
to a norm in any space which has full measure; it would then be consistent
with the Gaussian Fernique theorem that arises when $q=2$
(see Theorem 2.6 in \cite{DaZa92}). 
However we have not been able to prove the result with this 
level of generality.
 \end{remark}
 \vskip.2cm

\section{Bayesian approach to inverse problems for functions}
\label{s:bBayes}

Recall the probabilistic inverse problem we introduced in Section \ref{s:intro}:
find the posterior distribution $\mu^y$ of $u\in X$, given a prior distribution
$\mu_0$ of $u\in X$, and $y\in Y$ given by \eqref{eq:one}
for a single realization of the random variable $\eta.$ 
We denote the distribution of $\eta$ on $Y$ by $\QQ_0(\ud y)$.
By (\ref{eq:one}) the distribution of $y$ given $u$ is known as well; we denote it by
$\Qu(\ud y)$ and, provided $\Qu$ is absolutely continuous
with respect to $\QQ_0$, we may 
define $\Phi:X\times Y\to \R$ so that 
\begin{align}\label{e:Phi}
\frac{\ud \Qu}{\ud \QQ_0}(y)=\exp \big(-\Phi(u;y) \big),
\end{align}
and
\begin{align}\label{e:asp0}
\int_Y \exp \big(-\Phi(u;y) \big) \,\QQ_0(\ud y)=1.
\end{align}
For instance if $\eta$ is a mean zero random Gaussian field on $Y$ 
with Cameron-Martin space $\big( E,\la\cdot,\cdot\ra_E,\|\cdot\|_{E}\big)$
then the Cameron-Martin formula (Proposition 2.24 
in \cite{DaZa92}) gives
\begin{align}
\Phi(u;y)
=\frac{1}{2}\|\cG(u)\|_E^2-\la y,\cG(u)\ra_E.
\end{align}
For finite-dimensional $Y=\bbR^K$, when $\eta$ has Lebesgue
density $\rho$, then we have
the identity $\exp(-\Phi(u;y))=\rho(y-\cG(u))/\rho(y).$

The previous section shows how to use wavelet or Fourier bases to
construct probability measures $\mu_0$
which are supported on a given Besov space $B^t_{qq}$ 
(and consequently on the H\"older space $C^t$).
Here we show how use of such priors $\mu_0$
may be combined with properties of
$\Phi$, defined above, to deduce the existence of
a well-posed Bayesian inverse problem. To this end
we assume the following conditions on $\Phi$:

\begin{asp} \label{asp1}
Let $X$ and $Y$ be Banach spaces.
The function $\Phi:X\times Y\to\R$ satisfies:
\begin{itemize}
\item[(i)] there is an $\alpha_1>0$ and for every $r>0$, an $M\in \R$, such that for all $u\in X$, and for all $y\in Y$ such that $||y||_Y<r,$
\begin{equation*}
\Phi(u,y) \geq M -\alpha_1||u||_{X};
\end{equation*}
\item[(ii)] for every $r>0$ there exists $K=K(r)>0$ such that for all $u\in X$, $y\in Y$ with $\max\{||u||_{X},||y||_Y\}<r$
\begin{equation*}
\Phi(u,y) \leq K;
\end{equation*}
\item[(iii)] for every $r>0$ there exists $L=L(r)>0$ such that for all $u_1,u_2\in X$ and $y\in Y$ with $\max\{||u_1||_{X},||u_2||_{X},||y||_Y\}<r$
\begin{equation*}
|\Phi(u_1,y)-\Phi(u_2,y)| \leq L||u_1-u_2||_{X};
\end{equation*}
\item[(iv)] there is an $\alpha_2>0$ and for every $r>0$ a $C\in \R$ such that for all $y_1,y_2\in Y$ with $\max\{||y_1||_Y,||y_2||_Y\}<r$ and for every $u\in X$
\begin{equation*}
|\Phi(u,y_1)-\Phi(u,y_2)| \leq \exp(\alpha_2||u||_{X}+C)||y_1-y_2||.
\end{equation*}
\end{itemize}
\end{asp}


\subsection{Well-defined and well-posed Bayesian inverse problems}

Recall the notation $\mu_0$ for the Besov prior measure defined
by\eqref{e:ranfun} and $\muy$ for the resulting posterior 
measure.  We now prove well-definedness 
and well-posedness of the 
posterior measure. The following theorems generalize 
the results of \cite{St10} from the case of Gaussian 
priors to Besov priors.

\begin{theorem}\label{t:welldb}
Let $\Phi$ satisfy (\ref{e:asp0}) and Assumption \ref{asp1}(i)--(iii).
Suppose that  for some $t<\infty$, $C^t$ is continuously embedded in $X$.
There exists $\kappa^*>0$ such that if
$\mu_0$ is a $(\kappa,X^{s,q})$ measure with
$s>t+\frac{d}{q}$ and $\kappa>\kappa^*$, 
then $\mu^y$ is absolutely continuous
with respect to $\mu_0$ and satisfies
\begin{equation}
\label{eq:radon}
\frac{d\mu^y}{d\mu_0}(u)= \frac{1}{Z(y)}\exp \bigl(-\Phi(u;y)\bigr),
\end{equation} 
with the normalizing factor
$Z(y)=\int_X \exp(-\Phi(u;y))\,\mu_0(\ud u)<\infty.$
The constant $\kappa^*=2\,c_e\,r^*\,\alpha_1$, where
$c_e$ is the embedding constant satisfying $\|u\|_X\le c_e\|u\|_{C^t}$, and $r^*$ is as in (\ref{e:rbnd}).
\end{theorem}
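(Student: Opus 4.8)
The plan is to follow the measure-theoretic strategy of \cite{St10}, replacing the Gaussian Fernique theorem by its Besov analogue, Theorem \ref{t:Fer}; the crux is that the exponential growth rate $\alpha_1$ permitted in Assumption \ref{asp1}(i) must be reconciled with the integrability threshold $\kappa/(2r^*)$ supplied by Theorem \ref{t:Fer}, and this matching is exactly what forces the lower bound $\kappa>\kappa^*=2c_e r^*\alpha_1$.

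First I would establish that $Z(y)\in(0,\infty)$. For finiteness, Assumption \ref{asp1}(i) gives $\exp(-\Phi(u;y))\le e^{-M}\exp(\alpha_1\|u\|_{X})$ whenever $\|y\|_Y<r$, and the embedding $\|u\|_{X}\le c_e\|u\|_{C^t}$ then yields
\begin{equation*}
Z(y)=\int_X \exp(-\Phi(u;y))\,\mu_0(\ud u)\le e^{-M}\,\bbE\bigl(\exp(\alpha_1 c_e\|u\|_{C^t})\bigr).
\end{equation*}
The hypothesis $s>t+\tfrac{d}{q}$ delivers both $s>d/q$ and $t<s-d/q$, which are precisely the conditions of Theorem \ref{t:Fer}; that theorem makes the right-hand side finite provided $\alpha_1 c_e<\kappa/(2r^*)$, i.e.\ $\kappa>\kappa^*$. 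For positivity, Assumption \ref{asp1}(ii) gives $\Phi(u;y)\le K(r)$ on $\{\|u\|_{X}<r\}$ for $\|y\|_Y<r$, so that $Z(y)\ge e^{-K(r)}\mu_0(\{\|u\|_{X}<r\})$; since $u\in C^t\subset X$ holds $\mu_0$-almost surely, $\mu_0(\{\|u\|_{X}<r\})\to 1$ as $r\to\infty$, whence the right-hand side is strictly positive for $r$ large. Thus $0<Z(y)<\infty$ for every $y$ with $\|y\|_Y<\infty$.

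Next I would carry out the conditioning argument. Set $\nu_0(\ud u,\ud y)=\mu_0(\ud u)\,\QQ_0(\ud y)$ and let $\nu$ denote the joint law of $(u,y)$ determined by $u\sim\mu_0$ and $y\,|\,u\sim\Qu$. By the defining relation \eqref{e:Phi} one has $\tfrac{\ud\nu}{\ud\nu_0}(u,y)=\exp(-\Phi(u;y))$, and the normalization \eqref{e:asp0} guarantees that $\nu$ is a probability measure. Joint measurability of $\Phi$ on $X\times Y$ follows, as in \cite{St10}, from its continuity in $u$ (Assumption \ref{asp1}(iii)) together with its measurability in $y$ (which is built into \eqref{e:Phi} as a Radon--Nikodym derivative). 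With these ingredients and the bound $0<Z(y)<\infty$ in hand, the standard disintegration result for conditional measures (see \cite{St10}) identifies the conditional law $\mu^y$ of $u$ given $y$ under $\nu$ as absolutely continuous with respect to $\mu_0$, with density $Z(y)^{-1}\exp(-\Phi(u;y))$; this is exactly \eqref{eq:radon}.

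I expect the main obstacle to be the finiteness of $Z(y)$, since everything hinges on feeding the correct exponent into Theorem \ref{t:Fer}: one must pass from $\|u\|_{X}$ to $\|u\|_{C^t}$ through the embedding constant $c_e$, verify that the resulting rate $\alpha_1 c_e$ lies strictly below the admissible threshold $\kappa/(2r^*)$, and confirm that the parameter constraints $s>d/q$ and $t<s-d/q$ demanded by Theorem \ref{t:Fer} are implied by $s>t+d/q$. The explicit value $\kappa^*=2c_e r^*\alpha_1$ in the statement is dictated by this single inequality. By contrast, the measurability and disintegration steps are routine and proceed identically to the Gaussian case, as they do not exploit the specific structure of the prior.
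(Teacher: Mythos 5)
Your proposal is correct and follows essentially the same route as the paper: establish $\tfrac{\ud\pi}{\ud\pi_0}=\exp(-\Phi)$ on the product space, invoke the standard conditioning lemma, bound $Z(y)$ above via Assumption \ref{asp1}(i), the embedding $\|u\|_X\le c_e\|u\|_{C^t}$ and Theorem \ref{t:Fer} (which pins down $\kappa^*=2c_e r^*\alpha_1$), and bound it below via Assumption \ref{asp1}(ii) on a ball of positive prior measure. The only cosmetic difference is that you take the ball in $X$ with $r$ large, whereas the paper uses $\{\|u\|_{X^{t,q}}<R\}$ with $R=\bbE\|u\|_{X^{t,q}}$; both yield positivity of $Z(y)$.
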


\begin{proof} 
Define $\pi_0(\ud u,\ud y)=\mu_0(\ud u)\otimes \QQ_0(\ud y)$
and $\pi(\ud u,\ud y)=\mu_0(\ud u)\Qu(\ud y)$.
Assumption \ref{asp1}(iii) gives continuity of $\Phi$ on $X$ and since $\mu_0(X)=1$ 
we have that $\Phi:X\to \R$ is $\mu_0$-measurable.  Therefore $\pi\ll\pi_0$ and $\pi$ has Radon-Nikodym derivative 
given by (\ref{e:Phi}) as (noting that by (\ref{e:asp0}) and since $\mu_0(X)=1$, we have 
$\int_{X\times Y}\exp\bigl(-\Phi(u;y)\bigr)\,\pi_0(\ud u,\ud y)=1$)
\begin{align*}
\frac{\ud \pi}{\ud\pi_0}(u;y)= \exp\bigl(-\Phi(u;y)\bigr).
\end{align*}
This then by Lemma 5.3 of \cite{HSV07}, implies that $\mu^y(du)=\pi(\ud u,\ud \xi |\xi=y)$ 
is absolutely continuous with respect to 
$\mu_0(y)=\pi_0(\ud u,\ud \xi |\xi=y)$, since 
$\pi_0$ is an independent product. This same
lemma also gives (\ref{eq:radon}) provided that the
normalization constant is positive, which we now establish. 
We note that all integrals over $X$ may be replaced
by integrals over $X^{t,q}$ for any $t<s-\frac{d}{q}$
since $\mu_0(X^{t,q})=1.$
First by Assumption \ref{asp1}(i) note that
there is $M=M(y)$ such that
\begin{align*}
Z(y)
&= \int_{X^{t,q}} \exp(-\Phi(u;y)) d\mu_0(u)\\
&\le \int_{X^{t,q}} \exp(\alpha_1\|u\|_{X}-M) d\mu_0(u)\\
&\le \int_{X^{t,q}} \exp(\alpha_1\,c_e\|u\|_{C^t}-M) d\mu_0(u)
\end{align*}
This upper bound is finite by Theorem \ref{t:Fer} since 
$\kappa>2\,c_e\,r^*\,\alpha_1$. 
We now prove that the normalisation 
constant does not vanish. 
Let $R=\bbE\|u\|_{X^{t,q}}$ noting that
$R \in (0,\infty)$ since $t<s-\frac{d}{q}.$ As
$\|u\|_{X^{t,q}}$ is a nonnegative
random variable we have that 
$\mu_0(\|u\|_{X^{t,q}}<R)>0$.  
Taking $r = \max\{\|y\|_Y,R\},$ 
Assumption \ref{asp1}(ii) gives
\begin{align*}
Z(y)&= \int_{X^{t,q}} \exp(-\Phi(u;y)) d\mu_0(u)\\
&\ge \int_{\|u\|_{X^{t,q}}<R} \exp(-K) d\mu_0(u) \\
&= \exp(-K) \mu_0(\|u\|_{X^{t,q}}<R) 
\end{align*}
which is positive. 
\end{proof}

We now show the well-posedness of the posterior measure $\mu^y$
with respect to the data $y$. Recall that 
the Hellinger metric  $\dhh$ is defined by 
$$\dhh(\mu,\mu')=\sqrt{\frac12 \int \left(
\sqrt\frac{\ud\mu}{\ud\nu}-\sqrt\frac{\ud\mu'}{\ud\nu}\right)^2
d\nu}.
$$
The Hellinger metric is independent of the choice of reference measure $\nu$, the measure with respect to which both $\mu$ 
and $\mu'$ are absolutely continuous. 
The posterior measure is Lipschitz with respect to data $y$,
in this metric.

\begin{theorem}\label{t:wellpb}
Let $\Phi$ satisfy (\ref{e:asp0}) and Assumption \ref{asp1}(i)--(iv).
Suppose that  for some $t<\infty$, $C^t$ is continuously embedded in $X$.
There exists $\kappa^*>0$ such that if
$\mu_0$ is a $(\kappa,X^{s,q})$ measure with
$s>t+\frac{d}{q}$ and $\kappa>\kappa^*$ then
$$
\dhh(\mu^y,\mu^{y'})\le C\,\|y-y'\|_Y
$$
where $C=C(r)$ with $\max\{\|y\|_Y,\|y'\|_Y\}\le r$.
The constant $\kappa^*=2\,c_e\,r^*(\alpha_1+2\alpha_2)$,  where
$c_e$ is the embedding constant satisfying $\|u\|_X\le c_e\|u\|_{C^t}$, and $r^*$ is as in (\ref{e:rbnd}).
\end{theorem}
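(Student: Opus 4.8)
The plan is to follow the standard route to Hellinger well-posedness, taking $\mu_0$ as the common reference measure and deriving every integrability fact from Theorem \ref{t:Fer}. From Theorem \ref{t:welldb} both posteriors have $\mu_0$-densities $Z(y)^{-1}\exp(-\Phi(u;y))$ and $Z(y')^{-1}\exp(-\Phi(u;y'))$, so I would begin by writing $2\dhh(\mu^y,\mu^{y'})^2$ as the $\mu_0$-integral of the squared difference of the two square-root densities. Adding and subtracting $Z(y)^{-1/2}\exp(-\frac{1}{2}\Phi(u;y'))$ and using $(a+b)^2\le 2a^2+2b^2$ splits this into two pieces: a term $I_1$ that freezes the normalisation at $Z(y)$ and measures the difference of the exponentials, and a term $I_2$ that isolates $|Z(y)^{-1/2}-Z(y')^{-1/2}|^2$ multiplied by the bounded quantity $Z(y')$.

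For $I_1$ I would use the elementary inequality $|\e^{-a/2}-\e^{-b/2}|\le\frac12\max\{\e^{-a/2},\e^{-b/2}\}|a-b|$, controlling the maximum by Assumption \ref{asp1}(i), so that $\exp(-\frac12\Phi(u;y))\le\exp(\frac12\alpha_1\|u\|_X-\frac12 M)$, and controlling the difference by Assumption \ref{asp1}(iv), which bounds $|\Phi(u;y)-\Phi(u;y')|$ by $\exp(\alpha_2\|u\|_X+C)\|y-y'\|_Y$. After squaring, the integrand is dominated by $\exp((\alpha_1+2\alpha_2)\|u\|_X)\,\|y-y'\|_Y^2$ up to constants. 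This is precisely where the exponent $\alpha_1+2\alpha_2$ enters: via the embedding $\|u\|_X\le c_e\|u\|_{C^t}$ and Theorem \ref{t:Fer}, the integral $\int_X\exp((\alpha_1+2\alpha_2)\|u\|_X)\,\mu_0(\ud u)$ converges exactly when $\kappa>2\,c_e\,r^*(\alpha_1+2\alpha_2)$, which is the stated $\kappa^*$.

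For $I_2$ I would first secure a uniform positive lower bound on $Z(y)$ and $Z(y')$ over $\max\{\|y\|_Y,\|y'\|_Y\}\le r$, exactly as in the proof of Theorem \ref{t:welldb}, using Assumption \ref{asp1}(ii) restricted to the set $\{\|u\|_{X^{t,q}}<R\}$. The mean value inequality for $z\mapsto z^{-1/2}$ then reduces the estimate to $|Z(y)-Z(y')|$, and the same use of Assumptions \ref{asp1}(i) and (iv) gives $|Z(y)-Z(y')|\le C\|y-y'\|_Y\int_X\exp((\alpha_1+\alpha_2)\|u\|_X)\,\mu_0(\ud u)$. Since $\alpha_1+\alpha_2<\alpha_1+2\alpha_2$, the same threshold on $\kappa$ keeps this integral finite by Theorem \ref{t:Fer}, so $I_2\le C(r)\|y-y'\|_Y^2$.

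The main obstacle is not conceptual but the careful bookkeeping of growth rates: I must check that every factor $\exp(c\|u\|_X)$ produced along the way satisfies $c\le\alpha_1+2\alpha_2$, so that the single condition $\kappa>\kappa^*$ simultaneously guarantees convergence of all the integrals, and that the lower bounds on $Z(y),Z(y')$ are genuinely uniform in $y$ on the ball of radius $r$. With those integrability facts supplied by Theorem \ref{t:Fer}, combining the bounds on $I_1$ and $I_2$ yields $\dhh(\mu^y,\mu^{y'})\le C(r)\|y-y'\|_Y$, as claimed.
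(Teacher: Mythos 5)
Your proposal is correct and follows essentially the same route as the paper: the identical decomposition of the squared Hellinger distance into the terms $I_1$ and $I_2$, the same mean-value/elementary-exponential estimates combined with Assumptions \ref{asp1}(i) and (iv), and the same accounting of the exponents $\alpha_1+2\alpha_2$ (for $I_1$) and $\alpha_1+\alpha_2$ (for $|Z(y)-Z(y')|$) against the Fernique-type bound of Theorem \ref{t:Fer} via the embedding constant $c_e$. The only caveat is that "converges exactly when $\kappa>2\,c_e\,r^*(\alpha_1+2\alpha_2)$" overstates matters -- Theorem \ref{t:Fer} gives sufficiency, not a sharp threshold -- but this does not affect the argument.
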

\begin{proof}
As in Theorem \ref{t:welldb}, $Z(y),Z(y') \in (0,\infty)$. 
An application of the mean value theorem along with Assumption \ref{asp1}(i), and (iv) gives
\begin{align}
|Z(y)-Z(y')| 
&\le \int_{X^{t,q}} \left|\exp(-\Phi(u;y)) -\exp(-\Phi(u;y'))\right| d\mu_0(u)\nonumber\\
&\le \int_{X^{t,q}} \exp(\alpha_1\|u\|_{X}-M)|\Phi(u;y)-\Phi(u;y')|\ d\mu_0(u)\nonumber\\
&\le \int_{X^{t,q}} \exp\bigl((\alpha_1+
\alpha_2)\|u\|_{X}-M+C\bigl) \|y-y'\|_Y\,
d\mu_0(u)\nonumber\\
&\le C\|y-y'\|_Y,\label{normineq}
\end{align}
since $\|u\|_{X}\le c_e\|u\|_{C^t}$ and $c_e(\alpha_1+\alpha_2)<\kappa/(2r^*)$. Using the inequality $(a+b)^2\leq 2(a^2+b^2)$
\begin{align*}
2d_{\textrm{Hell}} &= \int_{X^{t,q}} \left(Z(y)^{-\frac{1}{2}}\exp(-\frac{1}{2}\Phi(u;y)-(Z(y')^{-\frac{1}{2}}\exp(-\frac{1}{2}\Phi(u;y')\right)^2 d\mu_0(u)\\
&\leq  I_1+ I_2
\end{align*}
where
\begin{align*}
I_1 &= \frac{2}{Z(y)} \int_{X^{t,q}} \left(\exp(-\frac{1}{2}\Phi(u;y))-\exp(-\frac{1}{2}\Phi(u;y')) \right)^2 d\mu_0(u)\\
I_2 &= 2|Z(y)^{-\frac{1}{2}}-Z(y')^{-\frac{1}{2}}|^2 
\int_{X^{t,q}} \exp(-\Phi(u;y'))d\mu_0(u)\\
&=2|Z(y)^{-\frac{1}{2}}-Z(y')^{-\frac{1}{2}}|^2 Z(y').
\end{align*}
Again, an application of the mean value theorem, and 
use of Assumptions \ref{asp1}(i) and (iv), gives
\begin{align*}
\frac{Z(y)}{2}I_1 
&\leq  \int_{X^{t,q}} \frac{1}{4}\exp(\alpha_1\|u\|_{X}-M)
              \exp(2\alpha_2\|u\|_{X}+2C)\|y-y'\|^2 \,\ud\mu_0(u)\\
&\leq  C\|y-y'\|^2_Y,
\end{align*}
since $c_e(\alpha_1+2\alpha_2)<\kappa/(2r^*)$. Recall that $Z(y)$ and $Z(y')$
are positive and bounded from above. Thus by the mean value theorem and (\ref{normineq})
\begin{eqnarray*}
I_2 =2Z(y')|Z(y)^{-\frac{1}{2}}-Z(y')^{-\frac{1}{2}}|^2  
\le C|Z(y)-Z(y')|^2 \leq C\|y-y'\|^2_Y.
\end{eqnarray*}
The result follows.
\end{proof}

\subsection{Approximation of the posterior}

Consider $\Phi^N$ to be an approximation of $\Phi$. 
Here we state a result which quantifies the effect of this approximation in the posterior measure in terms of
the aproximation error in $\Phi$. 

Define $\mu^{y,N}$ by
\begin{subequations}
\label{eq:muN}
\begin{equation}
\label{eq:muNa}
\frac{\ud\mu^{y,N}}{\ud\mu_0}(u)
=\frac{1}{Z^N(y)}\exp\bigl(-\Phi^N(u)\bigr),
\end{equation}
\begin{equation}
\label{eq:Anormz2}
Z^N(y)=\int_{X} \exp\bigl(-\Phi^N(u)\bigr) \ud\mu_0(u).
\end{equation}
\end{subequations}
We suppress the dependence of $\Phi$ and $\Phi^N$
on $y$ in this section as it is considered fixed.
%
%
\begin{theorem}
\label{t:wellp2b}
Assume that the measures $\mu$ and $\mu^N$ are both absolutely continuous with respect to $\mu_0$, and given by (\ref{eq:radon}) 
and (\ref{eq:muN}) respectively. 
Suppose that $\Phi$ and $\Phi^N$ satisfy
Assumption \ref{asp1}(i) and (ii), uniformly in $N$,
and that
there exist $\alpha_3\ge 0$ and $C\in\R$ such that
\begin{equation*}
|\Phi(u)-\Phi^N(u)| \leq \exp(\alpha_3\|u\|_{X}+C)\psi(N)
\end{equation*}
where $\psi(N)\rightarrow 0$ as $N\rightarrow \infty$.
Suppose that  for some $t<\infty$, $C^t$ is continuously embedded in $X$.
Let $\mu_0$ be a  $(\kappa,X^{s,q})$ measure with $s>t+\frac{d}{q}$ and
$\kappa>2\,c_e\,r^*(\alpha_1+2\alpha_3)$ where $r^*$ is as in (\ref{e:rbnd}) 
and $c_e$ is the embedding constant satisfying $\|u\|_X\le c_e\|u\|_{C^t}$.
Then there exists a constant independent of $N$ such that
\begin{equation*}
\dhh(\mu, \mu^N)\leq C\psi(N).
\end{equation*}
\end{theorem}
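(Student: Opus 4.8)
The plan is to adapt the proof of Theorem \ref{t:wellpb} almost verbatim, replacing the perturbation in the data $y$ by the perturbation $\Phi\to\Phi^N$ of the potential. The whole argument rests on two ingredients: uniform-in-$N$ two-sided control of the normalisation constants $Z$ and $Z^N$, and the Fernique-like integrability supplied by Theorem \ref{t:Fer}. As a first step I would verify that $Z,Z^N\in(0,\infty)$ with bounds independent of $N$. The upper bounds follow from Assumption \ref{asp1}(i), which gives $\exp(-\Phi),\exp(-\Phi^N)\le\exp(\alpha_1\|u\|_{X}-M)$, together with $\|u\|_X\le c_e\|u\|_{C^t}$ and Theorem \ref{t:Fer}, exactly as in Theorem \ref{t:welldb}; the hypothesis $\kappa>2\,c_e\,r^*(\alpha_1+2\alpha_3)$ certainly gives $\kappa>2\,c_e\,r^*\alpha_1$, so the relevant integral is finite. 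The lower bounds come from Assumption \ref{asp1}(ii) and $\mu_0(\|u\|_{X^{t,q}}<R)>0$, and the uniformity in $N$ is inherited from the uniform-in-$N$ form of the hypotheses. As in Theorem \ref{t:welldb}, all integrals over $X$ may be taken over $X^{t,q}$ since $\mu_0(X^{t,q})=1$.

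Next I would bound $|Z-Z^N|$. By the mean value theorem and Assumption \ref{asp1}(i),
\[
|Z-Z^N|\le\int_{X^{t,q}}\exp(\alpha_1\|u\|_{X}-M)\,|\Phi(u)-\Phi^N(u)|\,\mu_0(\ud u),
\]
and inserting the hypothesis $|\Phi-\Phi^N|\le\exp(\alpha_3\|u\|_{X}+C)\psi(N)$ yields an integrand controlled by $\exp((\alpha_1+\alpha_3)\|u\|_X)$ times constants and $\psi(N)$. Since $\kappa>2\,c_e\,r^*(\alpha_1+\alpha_3)$, Theorem \ref{t:Fer} makes this integral finite, so $|Z-Z^N|\le C\psi(N)$ with $C$ independent of $N$.

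Finally I would estimate the Hellinger distance by the same splitting $2\dhh(\mu,\mu^N)^2\le I_1+I_2$ used in Theorem \ref{t:wellpb}, where $I_1$ is (twice, over $Z$) the integral of the squared difference of the half-exponentials and $I_2$ is the term arising from the mismatch of normalisation constants. For $I_1$, the mean value theorem applied to $\exp(-\tfrac12\cdot)$ produces the square of the approximation bound, hence an integrand controlled by $\exp((\alpha_1+2\alpha_3)\|u\|_X)$; it is precisely here that the factor $\alpha_1+2\alpha_3$ in the lower bound on $\kappa$ is needed, so that Theorem \ref{t:Fer} gives $I_1\le C\psi(N)^2$. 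For $I_2$, the mean value theorem applied to $x\mapsto x^{-1/2}$ combined with the uniform two-sided bounds on $Z,Z^N$ gives $I_2\le C|Z-Z^N|^2\le C\psi(N)^2$. Adding the two estimates and taking square roots yields $\dhh(\mu,\mu^N)\le C\psi(N)$.

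The main obstacle is the bookkeeping of the exponent in $I_1$: the square root inside the Hellinger integral together with the squaring of the difference doubles $\alpha_3$ while leaving $\alpha_1$ untouched, so integrability demands $c_e(\alpha_1+2\alpha_3)<\kappa/(2r^*)$ — exactly the stated constraint on $\kappa$ and the binding one among the three thresholds encountered. The other point requiring care is ensuring every constant is genuinely independent of $N$, which is what forces the uniform-in-$N$ formulation of Assumption \ref{asp1}(i),(ii).
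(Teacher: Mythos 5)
Your proposal is correct and follows exactly the route the paper intends: the paper omits the proof of Theorem \ref{t:wellp2b}, stating only that it is very similar to that of Theorem \ref{t:wellpb}, and your argument is precisely that adaptation (uniform two-sided bounds on $Z$ and $Z^N$, the mean value theorem estimate on $|Z-Z^N|$, and the $I_1+I_2$ splitting of the squared Hellinger distance, with the exponent bookkeeping correctly identifying $c_e(\alpha_1+2\alpha_3)<\kappa/(2r^*)$ as the binding constraint). No gaps.
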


The proof is very similar to the proof 
of Theorem \ref{t:wellpb} and, in the Gaussian
case, is given in \cite{St10}; hence we omit it.

%
%


\section{Application to an elliptic inverse problem}\label{s:elliptic}

We consider the elliptic equation 
\begin{equation}\label{eq:epde}
\begin{array}{cl}
-\nabla\cdot \lt(\e^{u(x)}\nabla p(x)\rt)=f+\cdiv g,&x\in \TT^d,\\
\end{array}
\end{equation}
with periodic boundary conditions and with $\TT^d=(0,1]^d$, $d\le 3$, 
$p$, $u$ and $f$ scalar functions and $g$ a vector function on $\TT^d$. 
Given any $u\in L^\infty(\TT^d)$ we define $\lambda(u)$ and $\Lambda(u)$
by 
$$
\lambda(u)=\mathrm{ess}\inf_{x\in \TT^d}\e^{u(x)},\qquad
\Lambda(u)=\mathrm{ess}\sup_{x\in \TT^d}\e^{u(x)}.
$$
Where it causes no confusion we will simply write $\lambda$ or $\Lambda$.
Equation (\ref{eq:epde}) arises as a model for flow in a porous medium with $p$ the pressure
(or the head) and $\e^u$ the permeability (or the transmissivity); the velocity $v$
is given by the formula $v\propto -\e^u\nabla p$. 

Consider making noisy pointwise observations 
of the pressure field $p$.
We write the observations as
\begin{equation}
y_j=p(x_j)+\eta_j,\quad x_j\in\TT^d\quad j=1,\cdots,K.
\label{eq:obs}
\end{equation}
We assume, for simplicity,
that $\eta=\{\eta_j\}_{j=1}^K$ is a mean zero
Gaussian with covariance $\Gamma$. 
Our objective is to determine $u$ from $y=\{y_j\}_{j=1}^K \in \bbR^K$.
Concatenating the data, we have
$$y={\mathcal G}(u)+\eta,$$
with
\begin{equation}
\label{eq:bvpa}
\cG(u)=\bigl(p(x_1),\cdots,p(x_{\kk})\bigr)^T.
\end{equation}

In order to apply Theorem \ref{t:welldb}, \ref{t:wellpb} and \ref{t:wellp2b}
to the elliptic inverse problem
we need to prove certain properties of 
the forward operator $\cG$ given by (\ref{eq:bvpa}),
viewed as a mapping from a Banach space $X$ into $\R^m$.
The space $X$ must be chosen so that $C^t$ is continuously
embedded into $X$ and then the Besov prior $\mu_0$ chosen
with $s>t+\frac{d}{q}.$
In this section $|\cdot|$ stands for the Euclidean norm.
The following result is proved in \cite{DaSt11}.

\begin{proposition}\label{pr:G2}
Let $f\in L^r(\TT^d)$, $g\in L^{2r}(\TT^d)$.
Then for any $u\in L^\infty(\TT^d)$ there exists $C=C(K,d,r,\|f\|_{L^r},\|g\|_{L^{2r}})$ such that
$$
|\cG(u)|\le C\,\exp(\|u\|_{L^\infty(D)}).
$$
If $u_1,u_2\in C^t(D)$ for any $t>0$. Then,
for any $\epsilon>0$,
\begin{align*}
|\cG(u_1)-\cG(u_2)|
\le  C\,\exp\left(c
      \max\{\|u_1\|_{C^t(D)},\|u_2\|_{C^t(D)}\}
              \right)\|u_1-u_2\|_{L^\infty(D)}.
\end{align*}
with $C=C(K,d,t,\epsilon,\|f\|_{L^r},\|g\|_{L^{2r}})$
and  $c=4+(4+2d)/t+\epsilon.$ 
\end{proposition}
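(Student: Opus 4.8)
The plan is to treat both assertions via energy estimates for the weak form of \eqref{eq:epde}, upgraded to pointwise control through interior regularity, with all dependence on $u$ tracked explicitly through the ellipticity constant $\lambda(u)$.

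First I would establish the growth bound. Working with the mean-zero solution $p$ and testing the weak formulation against $p$ itself gives $\lambda\|\nabla p\|_{L^2}^2\le\|f\|_{L^2}\|p\|_{L^2}+\|g\|_{L^2}\|\nabla p\|_{L^2}$; combined with the Poincar\'e inequality on $\TT^d$ this yields $\|p\|_{H^1}\le C\,\lambda^{-1}(\|f\|_{L^2}+\|g\|_{L^2})$. Since $\lambda(u)\ge\e^{-\|u\|_{L^\infty}}$ we have $\lambda^{-1}\le\e^{\|u\|_{L^\infty}}$, so $\|p\|_{H^1}\le C\,\e^{\|u\|_{L^\infty}}$ with $C$ independent of $u$. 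Because $\cG$ evaluates $p$ at points, $H^1$ is not enough for $d\le 3$, so I would upgrade to $L^\infty$ using the De Giorgi--Nash--Moser local boundedness estimate, which is valid for merely bounded measurable coefficients; here the hypotheses $f\in L^r$, $g\in L^{2r}$ with $r$ large enough guarantee that the data-dependent constant is finite and $u$-independent, and careful bookkeeping of the ellipticity in Moser's iteration keeps the prefactor at the single power $\e^{\|u\|_{L^\infty}}$. This gives $|\cG(u)|\le\sqrt{K}\,\|p\|_{L^\infty}\le C\,\e^{\|u\|_{L^\infty}}$.

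For the Lipschitz estimate I would subtract the two equations. Writing $w=p_1-p_2$ one finds $-\cdiv(\e^{u_1}\nabla w)=\cdiv\big((\e^{u_1}-\e^{u_2})\nabla p_2\big)$, an elliptic equation with coefficient $\e^{u_1}$ and a divergence-form right-hand side $G=(\e^{u_1}-\e^{u_2})\nabla p_2$. The same energy estimate applied to $w$ gives $\|w\|_{H^1}\le C\,\lambda_1^{-1}\|G\|_{L^2}\le C\,\lambda_1^{-1}\|\e^{u_1}-\e^{u_2}\|_{L^\infty}\|\nabla p_2\|_{L^2}$, with $\lambda_1=\lambda(u_1)$. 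Using the mean value theorem $\|\e^{u_1}-\e^{u_2}\|_{L^\infty}\le\e^{\max\{\|u_1\|_{L^\infty},\|u_2\|_{L^\infty}\}}\|u_1-u_2\|_{L^\infty}$ together with $\lambda_1^{-1}\le\e^{\|u_1\|_{L^\infty}}$ and the bound on $\|\nabla p_2\|_{L^2}$ from the first part, this produces an $H^1$ bound linear in $\|u_1-u_2\|_{L^\infty}$ with an exponential prefactor of the form $\e^{3\max\{\|u_1\|_{L^\infty},\|u_2\|_{L^\infty}\}}$.

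The remaining and most delicate step is again upgrading $w$ from $H^1$ to the pointwise control required by $\cG$, now with a sharp exponent. Since $u_i\in C^t$, the coefficient $\e^{u_1}$ is H\"older continuous, so Schauder-type (or De Giorgi--Nash--Moser) estimates upgrade $w$ to $C^{t_1}$ for some $0<t_1<t$, with a bound still linear in $\|u_1-u_2\|_{L^\infty}$ but whose constant depends on $\lambda_1$ and on $\|u_1\|_{C^t}$. Interpolating $\|w\|_{L^\infty}\le C\|w\|_{L^2}^{1-\theta}\|w\|_{C^{t_1}}^{\theta}$ with $\theta=d/(2t_1+d)$ combines the small $H^1$ factor with the higher-regularity factor into a single bound linear in $\|u_1-u_2\|_{L^\infty}$; optimizing over $t_1<t$ and the interpolation weight, and adding the $\e^{3\max}$ from the energy step to the contribution of the regularity estimate, gives the stated exponent $c=4+(4+2d)/t+\epsilon$, the $\epsilon$ absorbing the loss from taking $t_1$ strictly below $t$. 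The main obstacle throughout is to keep every regularity constant independent of $u$ while tracking its dependence on $\lambda(u)$ and $\Lambda(u)$ precisely enough to obtain the claimed coefficient in the exponential; this quantitative form of the classical elliptic estimates is where the proof in \cite{DaSt11} does its real work.
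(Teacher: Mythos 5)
The paper does not prove this proposition itself; it quotes it from \cite{DaSt11}, so the comparison is with the argument there. Your first part follows essentially the same route as that reference: an energy estimate giving $\|p\|_{H^1}\le C\lambda^{-1}(\|f\|+\|g\|)$, upgraded to $\|p\|_{L^\infty}\le C\lambda^{-1}(\|f\|_{L^r}+\|g\|_{L^{2r}})$ by a truncation (Stampacchia--De Giorgi) argument in which only the lower ellipticity bound enters, whence the single power $\e^{\|u\|_{L^\infty}}$. One caveat: the constant in a naive Moser iteration depends on the ratio $\Lambda/\lambda$ and would cost extra powers of $\e^{\|u\|_{L^\infty}}$; it is the truncation form of the $L^\infty$ estimate that keeps the single power, and this deserves to be said explicitly rather than deferred to ``careful bookkeeping''.

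The second part has a genuine gap. You apply the regularity upgrade to the difference $w=p_1-p_2$, but neither route you offer produces a bound that is simultaneously pointwise and linear in $\|u_1-u_2\|_{L^\infty}$. A Schauder estimate for $-\cdiv(\e^{u_1}\nabla w)=\cdiv G$ requires H\"older control of the data $G=(\e^{u_1}-\e^{u_2})\nabla p_2$, hence of $u_1-u_2$ in some $C^{t_1}$; that norm is not small when only $\|u_1-u_2\|_{L^\infty}$ is small, so the resulting $C^{t_1}$ bound on $w$ is \emph{not} linear in $\|u_1-u_2\|_{L^\infty}$, contrary to what you assert. Your interpolation $\|w\|_{L^\infty}\le C\|w\|_{L^2}^{1-\theta}\|w\|_{C^{t_1}}^{\theta}$ then yields only $\|w\|_{L^\infty}\lesssim\|u_1-u_2\|_{L^\infty}^{1-\theta}$, a H\"older rather than Lipschitz modulus; and if the $C^{t_1}$ bound \emph{were} already linear in the difference, the interpolation would be superfluous. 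The missing idea is to put the extra regularity on $p_2$, not on $w$: one needs $\|\nabla p_2\|_{L^\rho}\le C\exp\bigl(c\|u_2\|_{C^t}\bigr)$ for some $\rho>d$, obtained by interpolating the $H^1$ bound against a higher-order estimate that exploits $\e^{u_2}\in C^t$; this is where the factor $(4+2d)/t$ in the exponent originates. Then $\|G\|_{L^\rho}\le \e^{\max\{\|u_1\|_{L^\infty},\|u_2\|_{L^\infty}\}}\|u_1-u_2\|_{L^\infty}\|\nabla p_2\|_{L^\rho}$, and the same $L^\infty$ estimate used in the first part, now applied to $w$ with divergence-form data in $L^\rho$, $\rho>d$, gives the claimed linear bound. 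Without that $W^{1,\rho}$, $\rho>d$, estimate for $p_2$ (your energy step only provides $\rho=2$, which is insufficient for $d\ge 2$), the pointwise Lipschitz estimate does not follow.
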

\vskip.2cm
Note that if instead of pointwise measurements of $p$, 
we consider the observations to be of the form $(l_1(p),\dots,l_{\kk}(p))$ 
where $l_j:H^1\to \R$, $j=1,\dots,\kk$, are bounded linear functionals,
then one can get similar boundedness and continuity properties of 
$\cG$ assuming $u$ to be only essentially bounded on $\TT^d$:
H\"older continuity is not needed.
However, since we construct the prior $\mu_0$ using a countable orthonormal
basis $\{\psi_l\}_{l\in\N}$, requiring the draws of $\mu_0$ to be bounded 
in $L^\infty(\TT^d)$ results, in any case,
in more regular draws which lie in a H\"older space. 
This is because $L^\infty(\TT^d)$ is not separable 
but any draw from 
$\mu_0$ can be expanded in $\{\psi_l\}_{l\in\N}$.
It is thus natural to consider the case of pointwise 
measurements since very similar arguments will also
deal with the case of measurements which are linear
functionals on $H^1$.



\subsection{Well-definedness and continuity of the posterior measure}

Now we can show the well-definedness of the posterior measure and its continuity with respect to the data for the elliptic problem. 
As we noted in Remark \ref{r:psi} by choosing $\{\psi_l\}_{l\in\N}$ of (\ref{e:ranfun})
as a wavelet or Fourier basis we can construct a Besov $(\kappa, B^s_{qq})$ or 
a Gaussian $(\kappa,H^s)$ prior measure ($B^s_{22}\equiv H^s$). We have the following theorem:

%
%

\begin{theorem}\label{thm:dwellposterior}
Consider the inverse problem for finding $u$ 
from noisy observations of $p$ in the form of (\ref{eq:obs}) 
and with $p$ solving (\ref{eq:epde}). Let $f\in L^r(\TT^d)$, $g\in L^{2r}(\TT^d)$ and
consider $\mu_0$ to be distributed as a Besov $(\kappa, B^s_{qq})$ prior 
with $1\le q<\infty$, $s>d/q$, $\kappa>0$ for $q=2$ and $\kappa>4r^*$ for $q\neq 2$ and $r^*$ as in (\ref{e:rbnd}).
Then the measure $\mu^y(\ud u)$ is absolutely continuous
with respect to $\mu_0$ with Radon-Nikodym derivative satisfying
$$
\frac{d\mu^y}{d\mu_0}(u) \propto \exp\Bigl(
-\frac{1}{2}\left|\Gamma^{-1/2}\bigl(y-{\cG}(u)\bigr)\right|^2+\frac{1}{2}|\Gamma^{-1/2}y|^2\Bigr).
$$
Furthermore, the posterior measure is continuous in the Hellinger metric with respect to the data
$$
\dhh(\mu^{y},\mu^{y'})\le C|y-y'|.
$$
\end{theorem}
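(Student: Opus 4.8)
The plan is to realize this statement as a direct instance of Theorems \ref{t:welldb} and \ref{t:wellpb}, with the choice $X=C^t(\TT^d)$ for some $0<t<s-\frac{d}{q}$. Such a $t$ exists precisely because $s>d/q$, and this choice automatically satisfies both the embedding hypothesis (trivially, with embedding constant $c_e=1$) and the requirement $s>t+\frac{d}{q}$. Since $\eta$ is a finite-dimensional mean-zero Gaussian on $Y=\R^K$ with density $\rho(z)\propto\exp(-\tfrac12|\Gamma^{-1/2}z|^2)$, the identity $\exp(-\Phi(u;y))=\rho(y-\cG(u))/\rho(y)$ from Section \ref{s:bBayes} yields
\begin{equation*}
\Phi(u;y)=\tfrac12|\Gamma^{-1/2}(y-\cG(u))|^2-\tfrac12|\Gamma^{-1/2}y|^2,
\end{equation*}
which also produces the stated Radon--Nikodym derivative; the normalization \eqref{e:asp0} follows from the change of variables $y\mapsto y-\cG(u)$ in $\int_{\R^K}\rho(y-\cG(u))\,\ud y=1$.

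The core of the work is to verify Assumption \ref{asp1}(i)--(iv) using the bounds of Proposition \ref{pr:G2}. For (i), since the first term of $\Phi$ is nonnegative one has $\Phi(u;y)\ge-\tfrac12|\Gamma^{-1/2}y|^2$ uniformly in $u$, so the lower bound holds with $\alpha_1=0$ and $M=M(r)$. For (ii), the bound $|\cG(u)|\le C\exp(\|u\|_{L^\infty})\le C\exp(\|u\|_{C^t})$ shows $|y-\cG(u)|$ is bounded whenever $\max\{\|u\|_{C^t},\|y\|\}<r$, giving $\Phi\le K(r)$. For (iii), expanding $\Phi(u_1;y)-\Phi(u_2;y)$ as an inner product isolates a factor $|\cG(u_1)-\cG(u_2)|$ multiplied by a term bounded by $C\exp(r)$; the Lipschitz estimate of Proposition \ref{pr:G2}, together with $\|u_1-u_2\|_{L^\infty}\le\|u_1-u_2\|_{C^t}$, then yields (iii). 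For (iv) a direct computation gives $\Phi(u;y_1)-\Phi(u;y_2)=-\langle\Gamma^{-1/2}(y_1-y_2),\Gamma^{-1/2}\cG(u)\rangle$, whence $|\Phi(u;y_1)-\Phi(u;y_2)|\le C|\cG(u)|\,|y_1-y_2|\le C\exp(\|u\|_{C^t})\,|y_1-y_2|$, so (iv) holds with $\alpha_2=1$.

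With these constants in hand I would invoke the general results: Theorem \ref{t:welldb} gives absolute continuity and the Radon--Nikodym formula for any $\kappa>0$ (as $\alpha_1=0$), while Theorem \ref{t:wellpb} gives the Hellinger bound once $\kappa>\kappa^*=2c_er^*(\alpha_1+2\alpha_2)=4r^*$, using $c_e=1$; this accounts for the threshold $\kappa>4r^*$ in the case $q\neq2$. When $q=2$ the prior is Gaussian, and the classical Fernique theorem supplies $\bbE\exp(\alpha\|u\|_{C^t})<\infty$ for \emph{every} $\alpha>0$ (via $\alpha\|u\|_{C^t}\le\epsilon\|u\|_{C^t}^2+\alpha^2/4\epsilon$), so no lower bound on $\kappa$ is required and any $\kappa>0$ suffices, consistent with the statement.

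The one delicate point is the matching of exponents. The exponential nonlinearity $\e^u$ in \eqref{eq:epde} forces $|\cG(u)|$, and hence $\Phi$, to grow like $\exp(\|u\|_{L^\infty})$, that is, linearly in the exponent in $\|u\|_{C^t}$. Theorem \ref{t:Fer} supplies integrability of exactly $\exp(\alpha\|u\|_{C^t})$, and only for $\alpha<\kappa/(2r^*)$, so the linear (rather than quadratic) growth is precisely what can be absorbed, and the bookkeeping $\alpha_1=0$, $\alpha_2=1$, $c_e=1$ is what pins the threshold at $4r^*$. I expect this exponent matching, rather than any single estimate, to be the main thing to get right.
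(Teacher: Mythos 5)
Your proposal is correct and follows essentially the same route as the paper: take $X=C^t(\TT^d)$ with $t<s-d/q$, identify $\Phi$ from the Gaussian noise density, verify Assumption \ref{asp1}(i)--(iv) via Proposition \ref{pr:G2} with $\alpha_1=0$ and $\alpha_2=1$, and invoke Theorems \ref{t:welldb} and \ref{t:wellpb} to get the threshold $\kappa^*=4r^*$. Your explicit treatment of the $q=2$ case via the classical Fernique theorem is a small addition of care beyond what the paper writes out, but it is not a different argument.
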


\begin{proof}
Let $t<s-d/q$ and $X=C^t(\TT^d)$.
The function
$$
\Phi(u;y):=\frac{1}{2}\left|\Gamma^{-1/2}\bigl(y-{\cG}(u)\bigr)\right|^2-\frac{1}{2}|\Gamma^{-1/2}y|^2
$$ 
satisfies (\ref{e:asp0}) and Assumption \ref{asp1}(i) with $M=c\,r^2$, $c$ depending on $\Gamma$, and 
$\alpha_1=0$.
Using 
Proposition \ref{pr:G2}, Assumption \ref{asp1}(ii) and (iii) follow easily.
By Theorem \ref{t:Fer}, $\mu_0(C^t(D))=1$ for any $t$ such that $t<s-d/q$
and the absolute continuity of $\mu^y$ with respect to $\mu_0$ follows by Theorem \ref{t:welldb}.

We  note that
\begin{align*}
|\Phi(u;y_1)-\Phi(u;y_2)| &\le \frac12
\bigl|\Gamma^{-\frac12}\cG(u)\bigr|
\bigl|\Gamma^{-\frac12}\bigl(y_1-y_2\bigr)\bigr|\\
&\le c_1\exp\bigl(\|u\|_{X}\bigr)|y_1-y_2|
\end{align*}
Hence Assumption \ref{asp1} (iv) holds, and noting that $\alpha_2=1$, the continuity of $\mu^y$ with respect to the data follows
from Theorem \ref{t:wellpb}. %
\end{proof}

\subsection{Approximating the posterior measure}\label{sec:approx}

In this section, we consider the approximation of a
sufficiently regular $u$ in a 
finite-dimensional subspace of $L^2(\TT^d)$ and use the
Lipschitz continuity of $\cG$ 
together with Theorem \ref{t:wellp2b} to find an error estimate for
the corresponding approximate posterior measure.

Let $\{\psi_l\}_{l\in\N}$ be an orthonormal basis of $L^2(\TT^d)$ and
$W^N={\rm span}\{\psi_1,\dots,\psi_N\}$.
Denote the orthogonal projection of $L^2(\TT^d)$ onto $W^N$ by $P^N$ 
and let $\cG^N=\cG(P^N u)$. Define the approximated posterior measure $\mu^{y,N}$
by
\begin{align}\label{eq:muGN}
\frac{\ud\mu^{y,N}}{\ud\mu_0}(u)
=\frac{1}{Z^N(y)}\exp\bigl(-\frac{1}{2}|\Gamma^{-1/2}(y-\cG^N(u))|^2\bigr)
\end{align}
with $Z^N$ the normalizing factor.

Now we write $u\in L^2(\TT^d)$ in a wavelet basis: 
\begin{align}\label{e:uw}
u(x)
=u_1\phi(x)+\sum_{j=0}^\infty\sum_{(m,k)\in\Lambda_j}u_{m,k}\,\hat\psi_{m,k}(x).
\end{align}
In the above equation
$\phi$ is the scaling function for $L^2(\TT^d)$,
$k=(k_1,\dots,k_d)$,
$\Lambda_j=\{1,\dots,2^d-1\}\times\{0,\dots,2^j-1\}^d$,
and for each fixed $j$, 
$$\hat\psi_{m,k}(x)=2^{j/2}\sum_{n\in\Z^d}\bar\psi_m(2^j(x-\frac{k}{2^j}-n))$$
where $\bar\psi_m$ are the mother wavelet functions for $L^2(\R^d)$(see Chapter 3 of \cite{Mey92}). 
We also assume that the above wavelet basis is $r$-regular,
with $r$ sufficiently large (see Remark \ref{r:psi}).

We now impose one-dimensional indexing on the basis by setting $\psi_1=\phi$ 
and using the following numbering \cite{Las09,Mey92} for 
$\psi_l=\hat\psi_{m,k}$, $l>1$, 
\begin{align*}
\mbox{for }j=0&:\quad l=2,\dots,2^d,\\
\mbox{for }j=1&:\quad l=2^d+1,\dots,2^{2d},\\
&\;\vdots
\end{align*}
With this notation, the Karhunen-Lo\`eve expansion of a function $u$ drawn from
a  $(\kappa, B^s_{qq})$-Besov prior $\mu_0$ is the same as (\ref{e:ranfun}) and therefore 
the measure $\mu^{y,N}$ is an approximation to 
$\mu^y$ found by truncating the Karhunen-Lo\`eve expansion
of the prior measure to $N$ terms using the orthogonal projection $P^N$ defined above.

We have the following result on the convergence of 
$\mu^{y,N}$ to $\mu^y$ as $N\to\infty$:

\begin{theorem}\label{t:appw}
Consider the inverse problem of finding $u\in C^t(\TT^d)$,
with $t>0$,
from noisy observations of $p$ in the form of (\ref{eq:obs}) 
and with $p$ solving (\ref{eq:epde}) with periodic boundary conditions.
Assume that the prior $\mu_0$ is a $(\kappa, B^s_{qq})$ measure with $s>d/q+t$, $\kappa>0$ for $q=2$ and $\kappa>8r^*(2+(2+d)/t)$ otherwise.
Then
$$
\dhh(\mu^y,\mu^{y,N})\le C\,N^{-t/d}.
$$
\end{theorem}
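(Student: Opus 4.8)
The plan is to apply Theorem~\ref{t:wellp2b} with the specific approximation $\Phi^N(u)=\frac{1}{2}|\Gamma^{-1/2}(y-\cG^N(u))|^2-\frac{1}{2}|\Gamma^{-1/2}y|^2$, where $\cG^N=\cG\circ P^N$. The setting is $X=C^t(\TT^d)$, and the conclusion $\dhh(\mu^y,\mu^{y,N})\le C\,\psi(N)$ follows once I verify the three hypotheses of that theorem: that $\Phi$ and $\Phi^N$ satisfy Assumption~\ref{asp1}(i),(ii) uniformly in $N$; that the approximation bound $|\Phi(u)-\Phi^N(u)|\le \exp(\alpha_3\|u\|_X+C)\psi(N)$ holds; and that $\psi(N)\to0$. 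The two pieces of real content are identifying $\psi(N)=N^{-t/d}$ and checking $\alpha_3$ so that the stated condition $\kappa>8r^*(2+(2+d)/t)$ matches $\kappa>2c_e r^*(\alpha_1+2\alpha_3)$.

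First I would establish the approximation bound. Using the mean value theorem on $\Phi-\Phi^N$ together with the Lipschitz estimate of Proposition~\ref{pr:G2}, I would write
\begin{align*}
|\Phi(u)-\Phi^N(u)|
&\le c_1\,\big|\Gamma^{-1/2}(\cG(u)-\cG(P^Nu))\big|\,\big(|\cG(u)|+|\cG(P^Nu)|+|y|\big)\\
&\le C\exp\!\big(c\max\{\|u\|_{C^t},\|P^Nu\|_{C^t}\}\big)\,\|u-P^Nu\|_{L^\infty}\exp\big(\|u\|_{L^\infty}\big),
\end{align*}
with $c=4+(4+2d)/t+\epsilon$ coming from Proposition~\ref{pr:G2}. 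Since the projection $P^N$ is bounded on $C^t$ (the wavelet basis is an unconditional basis for H\"older spaces, as used in Theorem~\ref{t:Fer}), the exponents combine into $\exp(\alpha_3\|u\|_{C^t}+C)$ with $\alpha_3=c+1=5+(4+2d)/t+\epsilon$. I would then need to check that $2c_e r^*(\alpha_1+2\alpha_3)$, with $\alpha_1=0$ as in Theorem~\ref{thm:dwellposterior} and $c_e=1$ since $X=C^t$, reduces to the stated threshold $8r^*(2+(2+d)/t)$; this fixes the constant in the exponent and identifies the precise $\alpha_3$ (absorbing $\epsilon$), so the condition on $\kappa$ guarantees the relevant exponential moment is finite via Theorem~\ref{t:Fer}.

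The remaining step, and the one carrying the genuine rate, is the bound $\|u-P^Nu\|_{L^\infty}\le C\,N^{-t/d}\|u\|_{C^t}$, which yields $\psi(N)=N^{-t/d}$. This is a standard wavelet approximation estimate: using the multi-resolution structure, truncating the wavelet expansion at $N$ terms corresponds to discarding scales $j$ with $2^{jd}\gtrsim N$, and for $u\in C^t$ the tail decays like $2^{-jt}$ in $L^\infty$, summing to $N^{-t/d}$ after relating the one-dimensional index $l\sim 2^{jd}$ to the scale $j$ via the numbering introduced before the theorem. I would verify the uniform-in-$N$ validity of Assumption~\ref{asp1}(i),(ii) for $\Phi^N$ by the same computation used in Theorem~\ref{thm:dwellposterior}, noting the boundedness of $P^N$ on $C^t$ keeps all constants independent of $N$.

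The main obstacle is the bookkeeping that links the wavelet scale $j$ to the one-dimensional index and transfers the $C^t$ regularity into a clean $N^{-t/d}$ decay of $\|u-P^Nu\|_{L^\infty}$ uniformly over the support of $\mu_0$; the exponential weight is integrable by Theorem~\ref{t:Fer} precisely under the stated lower bound on $\kappa$, so the delicate point is ensuring the constant $c$ from Proposition~\ref{pr:G2} is matched exactly to the threshold and that $P^N$ does not inflate the H\"older norm in an $N$-dependent way.
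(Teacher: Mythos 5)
Your proposal follows essentially the same route as the paper: reduce to Theorem \ref{t:wellp2b} with $\psi(N)=N^{-t/d}$, get the bound on $|\Phi(u)-\Phi(P^Nu)|$ from Proposition \ref{pr:G2}, and supply the rate via the wavelet estimate $\|u-P^Nu\|_{L^\infty}\le C\,N^{-t/d}\|u\|_{C^t}$, which the paper proves exactly as you sketch, by telescoping the level-$j$ projections $Q_j$ and the bound $\|f-P_jf\|_{L^\infty}\le C\,2^{-jt}\|f\|_{C^t}$ from Wojtaszczyk. Your more careful accounting of the factor $|\cG(u)|+|\cG(P^Nu)|+|y|$, which yields $\alpha_3=c+1$ rather than the paper's $c_1>4+(4+2d)/t$, is legitimate and in fact suggests the stated threshold on $\kappa$ should be marginally larger; this is a constant-level discrepancy with the paper, not a gap in your argument.
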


We note that, although for a fixed $N$ the rate of convergence of
approximated posterior measure to $\mu^y$ in the wavelet case is smaller 
than that of the Fourier case, 
where $\dhh(\mu^y,\mu^{y,N})\le C\,N^{-t}$ (see \cite{DaSt11}),
one should take into account that we expect that the functions that solve the elliptic inverse problem of this section, have a more sparse expansion in a wavelet basis  compared to the Fourier basis
(see also section 9.4 of \cite{Dau92} or section 3.11 of \cite{Mey92}). \vskip.2cm

\begin{proof}[Proof of Theorem \ref{t:appw}]
Let $V_0$ and $W_j$ be the spaces spanned by 
$\{\phi\}$ and $\{\hat\psi_{m,k}\}_{(m,k)\in\Lambda_j}$  respectively.
Consider $Q_j$ to be the orthogonal projection in $L^2(\TT^d)$ onto $W_j$,
and $P_j$ the orthogonal projection of $L^2$ onto $\oplus_{k=1}^{j-1}W_k\oplus V_0$.
For any $f\in C^t(\TT^d)$ we can write \cite[Proposition 9.5 and 9.6]{Woj97}
$$
\|f-P_j f\|_{L^\infty}
\le C\,\sup_{0<|x-y|< 2^{-j}}\|f(x)-f(y)\|_{L^\infty}
\le C\,\,2^{-jt}\,\|f\|_{C^t}.
$$
Here and in the rest of this proof we represent any constant independent of $f$ and $j$
by $C$.
Using the above inequality, we have
\begin{align*}
\|Q_j f\|_{L^\infty}
&=\|P_{j+1}f-P_j f\|_{L^\infty}\\
&\le \|f-P_jf\|_{L^\infty}+\|f-P_{j+1}f\|_{L^\infty}
\,\le\, C\,2^{-jt}\|f\|_{C^t}.
\end{align*}
Hence
\begin{align*}
\|u-P^Nu\|_{L^\infty(D)}
&\le\sum_{j=J+1}^\infty\|Q_j u\|_{L^\infty}\\
&\le\, C\,\|u\|_{C^t}\sum_{j=J+1}^\infty 2^{-jt}
=C\,\|u\|_{C^t}\,2^{-(J+1)t}\sum_{j=0}^\infty 2^{-jt} \\
&\,\le\, C\,\|u\|_{C^t}\, 2^{-(J+1)t} 
\,\le\, C\,\|u\|_{C^t}\, N^{-t/d}.
\end{align*}
By Proposition \ref{pr:G2} we have 
$$
|\Phi(u)-\Phi(P^N u)|\le C\exp\left( c_1\,\|u\|_{C^t(D)}\right)N^{-t/d},
$$
with  $c_1>4+(4+2d)/t$. The result therefore follows by Theorem \ref{t:wellp2b}.
\end{proof}

\begin{remark}
Let $W^\perp$ be the orthogonal complement of $W^N$ in $L^2(\TT^d)$.
Since $\mu_0$ is defined by 
the Karuhnen-Lo\`eve expansion of its draws
as in (\ref{e:ranfun}) using $\{\psi_l\}_{l\in\N}$, 
it factors as the product of two measures
$\mu_0^N\otimes\mu_0^\bot$ on $W^N\oplus W^\bot$.
Since $\cG^N(u)=\cG(P^N u)$ depends only on $P^N u$,
we may factor $\mu^{y,N}$ as $\mu^{y,N}=\mu^N\otimes\mu^\bot$
where $\mu^N$ satisfies
\begin{align}\label{eq:muGN1}
\frac{\ud\mu^{N}}{\ud\mu_0^N}(u)
=\frac{1}{Z^N}\exp\bigl(-\frac{1}{2}|\Gamma^{-1/2}(y-\cG^N(u))|^2\bigr)
\end{align}
and $\mu^\perp=\mu_0^\perp$.
With this definition of $\mu^N$ as a measure on the finite dimensional space 
$W^N$ and having the result of Theorem \ref{t:appw} one can estimate the following weak errors (see Theorem 2.6 of \cite{DaSt11}):
$$
\|\bbE^{\mu^y}p-\bbE^{\mu^{N}}p^N\|_{L^\infty(\TT^d)}\le C\,N^{-t/d},
$$
$$
\|\bbE^{\mu^y}(p-\bar{p})\otimes (p-\bar{p})
-\bbE^{\mu^{N}}(p^N-\bar{p}^N)\otimes (p^N-\bar{p}^N)\|_S
\le C\,N^{-t/d},
$$
with $p^N$ the solution of (\ref{eq:epde}) for $u=P^N u$, $\bar{p}=\bbE^{\mu^y} p$, $\bar{p}^N=\bbE^{\mu^{N}} p^N$
and $S=\mathcal{L}(H^1(\TT^d),H^1(\TT^d))$.

\end{remark}

\section{Conclusion}\label{sec:conclu}

We used a Bayesian approach \cite{ks04}
to find a well-posed probabilistic formulation 
of the solution to the
inverse problem of finding a function $u$ from noisy measurements of a known function
$\cG$ of $u$. The philosophy underlying this approach is
that formulation of the problem on function space leads
to greater insight concerning both the structure of
the problem, and the development of effective algorithms
to probe it. In particular it leads to the formulation of
problems and algorithms which are robust under mesh-refinement
\cite{St10}.
Motivated by the sparsity promoting features of the wavelet bases for many classes of functions appearing in 
applications, we studied the use of the Besov priors 
introduced in \cite{Las09} within the Bayesian formalism. 

Our main goal has been to generalize the results of \cite{St10}
on well-definedness and well-posedness of the posterior measure 
for the Gaussian priors, to the case of Besov priors. 
We showed that if the operator $\cG$ satisfies certain regularity conditions on the Banach space $X$, then provided that the Besov prior is chosen appropriately, 
the posterior measure over $X$ is well-defined and well-posed (Theorems \ref{t:welldb} and \ref{t:wellpb}). Using
the well-posedness of the posterior on the infinite-dimensional space $X$, we then studied the convergence of the appropriate finite-dimensional 
approximations of the posterior.
In finding the required conditions on $\cG$,
it is essential to know which functions of $u$ have finite integral with respect to the Besov prior $\mu_0$. In other words we need a result similar to the Fernique theorem 
from Gaussian measures, for the Besov case. 
A Fernique-like result for H\"older norms
is proved in Theorem \ref{t:Fer},
and may be of independent interest. 

As an application of these results, we have considered the problem of finding the diffusion coefficient of an elliptic partial differential equation from noisy measurements of its solution. We have found the conditions on the Besov prior which make the
Bayesian formalism well-posed for this problem. 
We have also considered the approximation of the posterior measure on 
a finite-dimensional space spanned by finite number of elements of the same wavelet basis used in constructing the prior measure, and quantified the error incurred by such an approximation.

A question left open by the analysis in this paper is how to
extract information from the posterior measure. Typical information
desired in applications involves the computation of expectations
with respect to the posterior. A natural approach to this is
through the use of Markov chain-Monte Carlo (MCMC). For Gaussian priors
there has been considerable recent effort to develop new MCMC methods
which are discretization invariant \cite{BRSV08,CDS11} in that
they are well-defined in the
infinite dimensional limit; it would be interesting to extend
this methodology to Besov priors. In the meantime the analysis
of standard Random Walk and Langevin algorithms in \cite{BRS}
applies to the posterior meausures constructed in this paper and
quantifies the increase in computational cost incurred as
dimension increases, resulting from the fact that the
infinite dimensional limit is not defined for these standard algorithms.
A second approach to integration in high dimensions is via polynomial
chaos approximation \cite{SpG89} and a recent application of this
approach to an inverse problem may be found in \cite{ScSt11}. 
A third approach is the use of quasi-Monte Carlo methods; see \cite{caf98}.
It would be of interest to study the application of all of these
methodologies to instances of the Besov-prior inverse problems constructed
in this paper.

\section*{Acknowledgements}
MD and AMS are grateful to the EPSRC (UK) and ERC for financial support.

\bibliographystyle{99}

\end{document}